\DeclareMathAlphabet{\mathsf}{OT1}{\sfdefault}{m}{n}
\newcommand{\nocontentsline}[3]{}
\newcommand{\tocless}[2]{\bgroup\let\addcontentsline=\nocontentsline#1{#2}\egroup}
\newcommand\SetOf[2]{\left\{#1\vphantom{#2}\ :\ #2\vphantom{#1}\right\}}
\DeclareMathAlphabet{\amathbb}{U}{bbold}{m}{n}
\newtheoremstyle{teoremas}
{13pt}
{13pt}
{\slshape}
{}
{\bfseries}
{}
{.5em}
{}
\theoremstyle{teoremas}
\newtheorem{theorem}{Theorem}[section]
\newtheorem{corollary}[theorem]{Corollary}
\newtheorem{lemma}[theorem]{Lemma}
\newtheorem{conjecture}[theorem]{Conjecture}
\newtheorem{proposition}[theorem]{Proposition}
\newtheorem*{theorem-no-label}{Theorem}
\newtheoremstyle{definition}
{12pt}
{12pt}
{}
{}
{\bfseries}
{}
{.5em}
{}
\theoremstyle{definition}
\newtheorem{problem}[theorem]{Problem}
\newtheorem{question}[theorem]{Question}
\newtheorem{example}[theorem]{Example}
\newtheorem{remark}[theorem]{Remark}
\newtheorem*{notation}{Essential notation}
\DeclareMathOperator{\rk}{rk}
\newcommand{\M}{\mathsf{M}}
\newcommand{\N}{\mathsf{N}}
\newcommand{\U}{\mathsf{U}}
\renewcommand{\lambda}{\uplambda}
\renewcommand{\mu}{\upmu}
\newcommand{\RR}{\mathbb{R}}
\newcommand{\LL}{\mathsf{\Lambda}}
\newcommand{\LLF}[1]{\mathsf{\Lambda}_{k,n}^{#1}}
\newcommand{\rank}{\operatorname{rk}}
\newcommand{\xc}{\operatorname{xc}}
   \def\MR#1{}
\title{Face enumeration for split matroid polytopes}
\author[L.~Ferroni]{Luis Ferroni}
\address{
  School of Mathematics, Institute for Advanced Study, Princeton (NJ), United States
}
\email{ferroni@ias.edu}
\author[B.~Schr\"oter]{Benjamin Schr\"oter}
\address{
  Department of Mathematics, KTH Royal Institute of Technology, Stockholm, Sweden
}
\email{schrot@kth.se}
\thanks{LF is a member at the Institute for Advanced Study, funded by the Minerva Research Foundation, he was also partially supported by the Swedish Research Council grant 2018-03968. BS is supported by the Swedish Research Council grant 2022-04224.}
\keywords{Matroid polytopes, face enumeration, split matroids, paving matroids}
\subjclass[2020]{52B05, 52B40, 05B35}
\begin{document}

\begin{abstract}
    This paper initiates the explicit study of face numbers of matroid polytopes and their computation. We prove that, for the large class of split matroid polytopes, their face numbers depend solely on the number of cyclic flats of each rank and size, together with information on the modular pairs of cyclic flats. We provide a formula which allows us to calculate $f$-vectors without the need of taking convex hulls or computing face lattices. We discuss the particular cases of sparse paving matroids and rank two matroids, which are of independent interest due to their appearances in other combinatorial and geometric settings.
\end{abstract}

\maketitle

\section{Introduction}
\noindent 
To every matroid $\M$ one may associate its base polytope $\mathscr{P}(\M)$, carrying all the information of the matroid. Not only this polytope plays prominent roles in combinatorial optimization \cite{schrijver}, but also is of fundamental importance in tropical geometry \cite{MaclaganSturmfels,Joswig_book}, the theory of valuations \cite{derksen-fink,ardila-sanchez}, combinatorial Hodge theory, and the study of matroid invariants \cite{berget-eur-spink-tseng,eur-huh-larson,ferroni-schroter}. 

A question that arises naturally in the study of a convex polytope $\mathscr{P}\subseteq \mathbb{R}^n$ is how many faces of each dimension $\mathscr{P}$ has. The \textbf{\emph{f}-vector} of $\mathscr{P}$ is defined by
    \[ f(\mathscr{P}) := (f_0, f_1,\ldots, f_{d-1},f_d),\]
where $f_i := \#\{\text{$i$-dimensional faces of $\mathscr{P}$}\}$ for each $i\in \{0,\ldots, d\}$ and $d:=\dim \mathscr{P}$. In particular, the number of vertices of $\mathscr{P}$ is just $f_0$, the number of facets of $\mathscr{P}$ is $f_{d-1}$, and $f_d=1$. 

The difficulty of calculating the $f$-vector may vary drastically depending on the polytope~$\mathscr{P}$, on the properties it possesses, or on how it is described; for some concrete examples of the computation of $f$-vectors and certain related problems, see \cite{ziegler}. The family of possible vectors arising as the $f$-vector of a polytope is notoriously hard, and their classification is open in dimensions as low as four, see \cite{ziegler-2007}. Even in the case of $0/1$-polytopes of fixed dimension, although the set of possible $f$-vectors is finite, much remains to be discovered, see \cite{ziegler01}. 

In this article we will initiate the study of the explicit face enumeration of matroid polytopes, by focusing on the well-structured subclass of \textbf{split matroids}. 
The face structure of some special classes as positroids and lattice path matroids appeared in previous work, however without an explicit enumeration. 
The class of split matroids was introduced by Joswig and Schr\"oter in \cite{joswig-schroter} to study tropical linear spaces. They have received considerable attention in the past few years, including a forbidden minor characterization \cite{cameron-mayhew}, hypergraphs descriptions \cite{berczi}, Tutte polynomial inequalities \cite{ferroni-schroter0}, subdivisions and computation of valuations \cite{ferroni-schroter}, and conjectures about exchange properties on the bases \cite{berczi-schwarcz} which are related to White's conjecture. 

Even though the $f$-vector of the matroid base polytope constitutes an invariant of the matroid~$\M$ under isomorphisms, it is not valuative; see Example \ref{example:f-non-valuative} below. This makes its computation considerably subtler and difficult. In particular, for the case of split matroids we require a non-trivial modification of the machinery presented in \cite{ferroni-schroter}.

One important reason why split matroids deserve to be studied is that they encompass the classes of paving and copaving matroids. A long-standing conjecture often attributed to Crapo and Rota, appearing in print in \cite{mayhew}, predicts that asymptotically almost all matroids are sparse paving. There is some evidence supporting this assertion \cite{pendavingh-vanderpol}, but another intriguing conjecture affirms that even restricting to the enumeration of \emph{non sparse paving} matroids, the class of split matroids will continue to be predominant \cite[Conjecture~4.11]{ferroni-schroter}.

As of today, the problem of face enumeration of matroid polytopes has not been approached systematically in the literature, and to the best of our knowledge there are no prior articles addressing their computation. Nonetheless, there are some results that could be of interest in the study of $f$-vectors of certain classes of matroids. The computation of the cd-index of matroid polytopes of rank two appears in work by Kim \cite{kim}. In \cite{an-jung-kim} An, Jung and Kim investigated the lattice of faces of the base polytopes of lattice path matroids. In \cite{ardila-rincon-williams} Ardila, Rinc\'on and Williams approached the lattice of faces of positroids, whereas in \cite{oh-xiang} Oh and Xiang studied the facets of positroid polytopes. In \cite{grande-sanyal} Grande and Sanyal used the faces of matroid polytopes to characterize their $k$-levelness. In all of the aforementioned cases, although combinatorial descriptions and properties of the faces of the polytope are provided, an explicit enumeration of them does not seem direct or easy. In \cite{postnikov-reiner-williams} Postnikov, Reiner and Williams described the $h$-vector of simple generalized permutohedra; however, although the class of generalized permutohedra encompasses the family of matroid base polytopes, these fail to be simple when the rank or the corank are greater than one. 

In particular, perhaps as a reminiscence of the situation for polytopes in general (and even for $0/1$-polytopes), questions about properties of $f$-vectors of matroid polytopes are widely open. 

\subsection*{Main results}
As mentioned before, the fact that the face numbers are not valuations makes the computation of the $f$-vector of matroid polytopes a delicate task. In the case of split matroids, we need more data than just the number of cyclic flats of each rank and size. Some information on their pairwise intersection is necessary.

In order to express the $f$-vector of a polytope $\mathscr{P}$ in a more compact fashion, we will often refer to the \textbf{\emph{f}-polynomial}, which is defined via:
    \[ f_{\mathscr{P}}(t) := \sum_{i=0}^d f_i\cdot t^i.\]
Following the notation and terminology of \cite{ferroni-schroter}, whenever we have a matroid $\M$ of rank $k$ and cardinality $n$, we denote by $\lambda_{r,h}$ the number of stressed subsets of rank $r$ and size $h$ with non-empty cover that $\M$ has. For a connected split matroid, the number $\lambda_{r,h}$ counts also the number of proper non-empty cyclic flats of rank $r$ and size $h$. Although one of the main results of that article establishes that the numbers $\lambda_{r,h}$ are enough to compute any valuative invariant on $\M$, we need further data to compute the $f$-vector. 

For a matroid $\M$ as before, we will denote by $\mu_{\alpha,\beta,a,b}$ the number of \textbf{modular pairs} of cyclic flats $\{F_1,F_2\}$ such that $a = |F_1\smallsetminus F_2|$, $b = |F_2\smallsetminus F_1|$, $\alpha = \rk(F_1)-\rk(F_1\cap F_2)$, and $\beta = \rk(F_2)-\rk(F_1\cap F_2)$; see also equation~\eqref{eq:star-property-pairs-of-flats} below.

The following constitutes the main result of this article and is stated as Theorem~\ref{thm:main} further below. It tells us that the numbers $\mu_{\alpha,\beta,a,b}$ are the precise additional datum needed to perform the computation of the $f$-vector of a split matroid polytope. Moreover, the statement tells us concretely how to calculate the number of faces of given dimension.
\begin{theorem-no-label}
      Let $\M$ be a connected split matroid of rank $k$ on $n$ elements. The number of faces of its base polytope $\mathscr{P}(\M)$ is given by the polynomial
        \[
        f_{\mathscr{P}(\M)}(t) = 
        f_{\Delta_{k,n}}(t)-\sum_{r,h} \lambda_{r,h}\cdot u_{r,k,h,n}(t)-\sum_{\alpha,\beta,a,b} \mu_{\alpha,\beta,a,b}\cdot w_{\alpha,\beta,a,b}(t)
        \]
    where the first sum ranges over all values with $0<r<h<n$ and the second sum ranges over the values $0<\alpha<a$, $0<\beta<b$ for which either $a<b$ or $a=b$ and $\alpha\leq\beta$.  
\end{theorem-no-label}

In the above theorem, the expressions $u_{r,k,h,n}(t)$ and $w_{\alpha,\beta,a,b}(t)$ are polynomials which depend only on their subindices. We present in Propositions \ref{prop:nice-formula-w} and \ref{prop:nice-formula-u} explicit (but complicated) formulas for them which allow us or a computer to calculate the face numbers effortlessly. 
A formula for the $f$-vector of the hypersimplex $\Delta_{k,n}$ is also given explicitly in Example \ref{ex:hypersimplex}.
In particular, the entire calculation can be done without the necessity of building costly face lattices or computing convex hulls.

As direct but interesting application of our result, we provide closed expressions for the $f$-vector of sparse paving matroids, a class that made a prominent appearance in the theory of the extension complexity of independence polytopes \cite{rothvoss}. We also prove a fairly explicit formula for the $f$-vector of arbitrary rank two matroids, which is of independent interest due to the connection of these polytopes with edge polytopes of complete multipartite graphs \cite{ohsugi-hibi}.

\section{The number of faces of split matroids}\label{sec:two}

\subsection{The set up} Throughout this paper we will assume that the reader is familiar with the usual terminology and notation in matroid theory.
For the notions and machinery introduced very recently, in particular about \textbf{stressed subsets}, \textbf{relaxations}, and \textbf{Schubert elementary split matroids} we refer the reader to our previous article \cite[Sections~3--4]{ferroni-schroter}. Regarding \textbf{split matroids} and \textbf{elementary split matroids} the reader can consult the same article as well as \cite{joswig-schroter,berczi}.
However, basic knowledge on polytopes should be enough to follow the arguments and methods in this manuscript.

For a $d$-dimensional polytope $\mathscr{P}$ we denote by $f(\mathscr{P}):=(f_0,\ldots,f_d)$ its \emph{$f$-vector}, and by
    \[
    f_{\mathscr{P}}(t) := \sum_{i=0}^{d} f_i\ t^i
    \]
its \emph{$f$-polynomial}. In both cases, $f_i$ denotes the number of $i$-dimensional faces of $\mathscr{P}$. Notice that we omit the inclusion of $f_{-1}:=1$ for the empty set in both the $f$-vector and the $f$-polynomial, but we do include $f_d=1$ for the polytope itself. A basic property of $f$-polynomials that we will use without explicitly mentioning is the fact that it behaves multiplicatively under cartesian products of polytopes, i.e., $f_{\mathscr{P}_1\times \mathscr{P}_2}(t) = f_{\mathscr{P}_1}(t) \cdot f_{\mathscr{P}_2}(t)$. Another basic property that we use implicitly is that a matroid on a ground set of size $n$ with exactly $c$ connected components has a base polytope of dimension $\dim \mathscr{P}(\M) = n - c$ which is the product of the base polytopes of its $c$ direct summands.

\begin{notation} 
    Following \cite{ferroni-schroter}, whenever we have a matroid $\M$, unless specified otherwise, the rank of $\M$ is denoted by $k$ and the size of its ground set is denoted by $n$. We reserve the letters $r$ and $h$ for the rank and the size of stressed subsets that $\M$ may possess.
\end{notation}

Our aim is to find formulas for the number of faces of a matroid base polytope $\mathscr{P}(\M)$ whenever the matroid $\M$ is connected, i.e., $\mathscr{P}(\M)\subseteq \RR^n$ is of dimension $n-1$, and split. Note that under the assumption of connectedness the classes of split matroids and elementary split matroids coincide \cite[Theorem~11]{berczi}. Since the base polytope of a direct sum of matroids $\M_1\oplus \M_2$ is the cartesian product of $\mathscr{P}(\M_1)$ and $\mathscr{P}(\M_2)$, the $f$-vector of any disconnected split matroid can be recovered from the $f$-vector of the connected components, all of which are split as well.

The most basic example of a matroid polytope is the hypersimplex $\Delta_{k,n}$, the matroid base polytope of the uniform matroid $\U_{k,n}$ of rank $k$ on $n$ elements.

\begin{example} \label{ex:hypersimplex}
    The face enumeration of hypersimplices is encoded in the following $f$-polynomial:
    \begin{align*}
        f_{\mathscr{P}(\U_{k,n})}(t) = f_{\Delta_{k,n}}(t) 
    &= 
    \binom{n}{k} + \sum^{n-1}_{i=1} \binom{n}{i+1}\sum_{j = 1}^{i} \binom{n-i-1}{k-j}\cdot t^i \enspace . 
    \end{align*}
    This formula can be obtained by contracting and deleting the elements of $\U_{k,n}$.
    That is, by intersecting with hyperplanes of the form $x_i=0$ or $x_i=1$ (and forgetting the coordinate $i$) which leads to lower dimensional hypersimplices. For a detailed proof see for example \cite[Corollary~4]{hibi-li-ohsugi}.
\end{example}

The next example is similar to the one in \cite[Remark~5.9]{ferroni2} and gives a glimpse of the subtlety of the $f$-vector as a matroid invariant. In general, we see that the assignment $\M \mapsto f_{\mathscr{P}(\M)}(t)$ is an invariant of the matroid $\M$ that fails to be valuative. Hence its computation is a more delicate task, even for the case of paving or split matroids. In these cases, we cannot rely on the strength of \cite[Theorem~5.3]{ferroni-schroter} --- that result asserts that the evaluation of a valuative invariant on a split matroid~$\M$ can be achieved by knowing relatively little about the matroid $\M$, consisting in its rank $k$, its size $n$, and parameters $\lambda_{r,h}$ which denote the number of stressed subsets with non-empty cover of rank $r$ and size $h$ that $\M$ has.
If one is interested in knowing the $f$-vector of $\mathscr{P}(\M)$, the matroidal information we just mentioned is far from being enough. One of the main difficulties in order to carry out the enumeration of the faces of $\mathscr{P}(\M)$ consists of first identifying what matroid data we need in addition to the parameters mentioned before. 

\begin{example}\label{example:f-non-valuative}
   Consider the four matroids $\U_{3,6}$, $\M$, $\N_1$ and $\N_2$ with ground set $\{1,\ldots,6\}$ and rank three, whose families of bases are given as follows:
        \begin{align*}
            \mathscr{B}(\U_{3,6}) &:= \binom{[6]}{3},
            &\mathscr{B}(\N_1) &:= \binom{[6]}{3} \smallsetminus \{\{1,2,3\}, \{4,5,6\}\}\\
            \mathscr{B}(\M) &:= \binom{[6]}{3}
            \smallsetminus \{\{1,2,3\}\},
            &\mathscr{B}(\N_2) &:= \binom{[6]}{3} \smallsetminus \{\{1,2,3\}, \{3,4,5\}\}.
        \end{align*}
   The $f$-vectors of their base polytopes are respectively:
    \begin{align*}
        f(\mathscr{P}(\U_{3,6})) &= (20, 90, 120, 60, 12, 1),
        &f(\mathscr{P}(\N_1)) &= (18, 72, 102, 60, 14, 1),\\
        f(\mathscr{P}(\M)) &= (19, 81, 111, 60, 13, 1),
        &f(\mathscr{P}(\N_2)) &= (18, 72, 101, 59, 14, 1).
    \end{align*}
    All of these matroids are sparse paving. In particular, the two matroids $\N_1$ and $\N_2$ have, e.g., the same Tutte polynomial and the same Ehrhart polynomial --- in fact, via \cite[Corollary~5.4]{ferroni-schroter} any valuative invariant on these two matroids yields the same result. Yet, observe that their $f$-vectors differ in the third and the fourth entries.
\end{example}

\subsection{Schubert elementary split matroids and a technical lemma} 

By using \cite[Corollary~3.29]{ferroni-schroter}, we see that the intersection of the hypersimplex $\Delta_{k,n}$ with the half-space of a single split hyperplane leads to the polytope:
\begin{equation}\label{eq:cuspidal}
    \mathscr{P}(\LL_{k-r,k,n-h,n}) = \SetOf{x\in\Delta_{k,n}}{\sum_{i=1}^h x_i\leq r} \enspace ,
\end{equation}
for appropriate values $r$ and $h$. This is the base polytope of the Schubert elementary split matroid $\LL_{k-r,k,n-h,n}$, a matroid having exactly three cyclic flats: the empty set, the entire ground set, and one proper cyclic flat of size $h$ and rank $r$.
For the purposes of this paper, the reader may regard equation~\eqref{eq:cuspidal} as the definition of Schubert elementary split matroids. 

Let us introduce some notation that will help us formulate later our main results in a more compact fashion:
\begin{equation}\label{eq:definition-of-u}
    u_{r,k,h,n}(t) := f_{\Delta_{k,n}}(t)-f_{\mathscr{P}(\LL_{k-r,k,n-h,n})}(t). 
\end{equation}
The $i$-th coefficient of this polynomial is the difference between the number of $i$-dimensional faces of the hypersimplex $\Delta_{k,n}$ and the number of $i$-dimensional faces of the Schubert elementary split matroid $\LL_{k-r,k,n-h,n}$. A non-obvious property is that some of these coefficients may be negative while other are positive --- moreover, the actual sign of each individual coefficient a~priori depends on the four parameters $r,k,h,n$.

Before we go on, let us introduce a second polynomial, which will play an important role in the sequel. For fixed numbers $0< \alpha < a$ and $0<\beta < b$ let us define,
\begin{align*}
    w_{\alpha,\beta,a,b}(t)
    &:= f_{\Delta_{\alpha+\beta,a+b}}(t)-f_{\Delta_{\alpha,a}}(t)\cdot f_{\Delta_{\beta,b}}(t)-u_{\alpha,\alpha+\beta,a,a+b}(t)-u_{\beta,\alpha+\beta,b,a+b}(t)\\
    &\; = f_{\mathscr{P}(\LL_{\beta,\alpha+\beta,b,a+b})}(t) + f_{\mathscr{P}(\LL_{\alpha,\alpha+\beta,a,a+b})}(t) - f_{\Delta_{\alpha+\beta,a+b}}(t) - f_{\Delta_{\alpha,a}}(t)\cdot f_{\Delta_{\beta,b}}(t).
\end{align*}

Later, in Proposition~\ref{prop:nice-formula-w}, we provide a compact formula for the polynomial $w_{\alpha,\beta,a,b}(t)$ and a formula for the polynomial $u_{r,k,h,n}(t)$ in Proposition~\ref{prop:nice-formula-u} both of which can be used to compute these polynomials, bypassing the computation of $f$-vectors of Schubert elementary split matroids using the polytopes themselves.

\begin{remark}
    The intuition of why it is reasonable to consider and define the complicated expression above stems from \cite[Example~5.2]{ferroni-schroter}. As follows from the explanation there, if the assignment $\M\mapsto f_{\mathscr{P}(\M)}(t)$ were valuative, then the defining formula for $w_{\alpha,\beta,a,b}(t)$ would actually be identically zero. The polynomial $w_{\alpha,\beta,a,b}(t)$ quantifies (in a certain way) how far the map $\M\mapsto f_{\mathscr{P}(\M)}(t)$ is from being valuative.
\end{remark}

The following lemma is the key in the proof of our main result. Its proof constitutes arguably the most technical part of the paper.
In the Lemma and the text below we denote the Schubert elementary split matroid of rank $k$ on $n$ elements with proper cyclic flat $F$ by $\LLF{F}$. This matroid is isomorphic to the matroid $\LL_{k-\rank F,k,n-|F|,n}$.
Similarly, for a set $N$ of coordinates we use the notation $\Delta_{k,N}\cong\Delta_{k,|N|}$ to indicate the coordinates of the hypersimplex.

\begin{lemma}\label{lem:key} 
    Let $\N$ be a rank $k$ split matroid on $[n]$ whose cyclic flats are the four sets $\varnothing$, $F$, $G$, and $F\cup G=[n]$ of rank $0$, $r_F$, $r_G$, and $k$, respectively.
    Then
    \begin{align}\label{eq:twosplits}
        f_{\mathscr{P}(\N)}(t) 
        &= f_{\mathscr{P}(\LLF{F})}(t) + f_{\mathscr{P}(\LLF{G})}(t) - f_{\Delta_{k,n}}(t)
    \end{align}
    if $|F\cap G| + k < r_F+r_G$, and
    \begin{align}\label{eq:twosplits_w}
        f_{\mathscr{P}(\N)}(t)
        &= f_{\mathscr{P}(\LLF{F})}(t) + f_{\mathscr{P}(\LLF{G})}(t) - f_{\Delta_{k,n}}(t) - w_{r_F-c,r_G-c,|F|-c,|G|-c}(t)
    \end{align}
    where $c = |F\cap G|$ if $|F\cap G| + k = r_F+r_G$ otherwise.
\end{lemma}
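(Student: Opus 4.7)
My approach starts by interpreting the polytopes as intersections of the hypersimplex with two split half-spaces: $\mathscr{P}(\N) = \Delta_{k,n}\cap H_F^-\cap H_G^-$ where $H_F^- := \{x : \sum_{i\in F} x_i\leq r_F\}$ and $H_G^- := \{x : \sum_{i\in G} x_i\leq r_G\}$, and $\mathscr{P}(\LLF{F}) = \Delta_{k,n}\cap H_F^-$, $\mathscr{P}(\LLF{G}) = \Delta_{k,n}\cap H_G^-$ by~\eqref{eq:cuspidal}. I first analyse the opposite corner $R := \Delta_{k,n}\cap H_F^+\cap H_G^+$: summing the two reverse inequalities and using $F\cup G = [n]$ yields $\sum_{i\in F\cap G}x_i\geq r_F+r_G-k$, while $\sum_{F\cap G}x_i\leq|F\cap G|$ holds automatically. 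Therefore $R=\varnothing$ precisely in the non-modular case, whereas in the modular case $R$ collapses to the face of $\Delta_{k,n}$ where $x_i=1$ for $i\in F\cap G$, $\sum_{F\setminus G}x_i=\alpha$ and $\sum_{G\setminus F}x_i=\beta$, which is affinely the product $\Delta_{\alpha,a}\times\Delta_{\beta,b}$.

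The main combinatorial tool is a cutting identity for $f$-polynomials: if a polytope $P$ is cut by a hyperplane $H$ into $P^\pm=P\cap H^\pm$, then classifying each face of $P$ as lying strictly in $H^-$, strictly in $H^+$, on $H$, or crossing $H$ (i.e., having its relative interior meet $H$) produces
\[
f_{P^-}(t) \;=\; f_P(t) - B(t) + C(t)/t,
\]
where $B(t) = \sum_i b_i t^i$ counts faces of $P$ strictly in $H^+$ and $C(t) = \sum_i c_i t^i$ counts faces of $P$ crossing $H$ (note $c_0=0$). Applying this to $(P,H)=(\Delta_{k,n},H_G)$ yields $f_{\mathscr{P}(\LLF{G})}(t) = f_{\Delta_{k,n}}(t) - u^G(t)$ where $u^G := B^G - C^G/t$ equals $u_{r_G,k,|G|,n}(t)$ from~\eqref{eq:definition-of-u}, and applying it to $(P,H)=(\mathscr{P}(\LLF{F}),H_G)$ yields $f_{\mathscr{P}(\N)}(t) = f_{\mathscr{P}(\LLF{F})}(t) - u^{F,G}(t)$ with $u^{F,G} := B^{F,G} - C^{F,G}/t$. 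The lemma is thus equivalent to the statement $u^{F,G}-u^G = 0$ in case~\eqref{eq:twosplits} and $u^{F,G}-u^G = w_{\alpha,\beta,a,b}(t)$ in case~\eqref{eq:twosplits_w}.

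In the non-modular case the emptiness of $R$ implies that every point of $\Delta_{k,n}$ strictly above $H_G$ lies strictly below $H_F$, and every point on $H_F$ lies strictly below $H_G$. Hence the new faces on $H_F$ produced by the $H_F$-cut lie strictly below $H_G$, contributing to neither $B^{F,G}$ nor $C^{F,G}$; the other new faces $\phi\cap H_F^-$ (for $\phi$ a face of $\Delta_{k,n}$ crossing $H_F$) are never strictly above $H_G$, and they cross $H_G$ exactly when $\phi$ does. Sending a face of $\Delta_{k,n}$ contributing to $B^G$ or $C^G$ to itself when it lies in $H_F^-$, or to $\phi\cap H_F^-$ when it crosses $H_F$, gives a dimension-preserving bijection onto the faces contributing to $B^{F,G}$ or $C^{F,G}$. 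Consequently $u^{F,G}=u^G$, which gives~\eqref{eq:twosplits}.

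In the modular case the bijection above breaks down precisely on faces whose relative interior approaches $R$, and the main obstacle is to identify the discrepancy $u^{F,G}-u^G$ with $w_{\alpha,\beta,a,b}(t)$. The key observation is that the face of $\Delta_{k,n}$ obtained by fixing $x_i=1$ for $i\in F\cap G$ is affinely the hypersimplex $\Delta_{\alpha+\beta,a+b}$ on the remaining $a+b$ coordinates, equipped with the two analogous split hyperplanes $\sum_{F\setminus G}x_i=\alpha$ and $\sum_{G\setminus F}x_i=\beta$, whose own opposite corner is $\Delta_{\alpha,a}\times\Delta_{\beta,b}$. The exceptional faces causing the breakdown are in dimension-preserving correspondence with the analogous exceptional faces of this local model, and one checks that $u^{F,G}-u^G$ equals the case-\eqref{eq:twosplits} deficit for the local configuration, namely $f_{\mathscr{P}(\LL_{\beta,\alpha+\beta,b,a+b})}(t) + f_{\mathscr{P}(\LL_{\alpha,\alpha+\beta,a,a+b})}(t) - f_{\Delta_{\alpha+\beta,a+b}}(t) - f_{\Delta_{\alpha,a}}(t)\cdot f_{\Delta_{\beta,b}}(t)$, which is $w_{\alpha,\beta,a,b}(t)$ by its very definition. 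Carrying out this local-to-global matching rigorously, while correctly accounting for the dimension shift by $1/t$ inherited from the cutting identity, is the most delicate part of the argument.
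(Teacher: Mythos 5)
Your framework is sound and your non-modular case is complete: the cutting identity $f_{P\cap H^-}(t)=f_P(t)-B(t)+C(t)/t$ is correct, and when $|F\cap G|+k<r_F+r_G$ the emptiness of $R=\Delta_{k,n}\cap H_F^+\cap H_G^+$ does yield the type- and dimension-preserving bijection you describe, giving $u^{F,G}=u^G$ and hence \eqref{eq:twosplits}. Up to repackaging through the cut formula, this is the same comparison-of-faces argument the paper makes using the fact that no point of $\Delta_{k,n}$ violates both split inequalities.

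The gap is the modular case, which is the actual content of the lemma: there the correction term $w_{r_F-c,r_G-c,|F|-c,|G|-c}(t)$ has to be identified, and your proposal asserts rather than proves it (``one checks'', ``the most delicate part''). Two concrete things are missing. First, your bookkeeping for the non-exceptional faces relied on consequences of $R=\varnothing$ (e.g.\ that every point of $H_F\cap\Delta_{k,n}$ lies strictly below $H_G$) which fail when $|F\cap G|+k=r_F+r_G$; so for a face $\phi$ crossing both hyperplanes one must decide afresh whether $\phi\cap H_F^-$ crosses $H_G$ or only touches it along $H_F\cap H_G$, and what $\phi\cap H_F$ contributes --- this is exactly the ``split into two versus three cells'' analysis in the paper's proof, and it cannot be dismissed as a breakdown localized by hand-waving ``near $R$''. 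Second, one must actually show that every exceptional face lies in the hypersimplex face $\{x_i=1,\ i\in F\cap G\}$ and has the product form $\Delta_{|C|,C}\times\Delta_{\alpha',A}\times\Delta_{\beta',B}\times\Delta_{0,D}$ with $A\subseteq F\sm G$, $B\subseteq G\sm F$, $F\cap G\subseteq C$ (in the paper this follows from the inequality $|F\cap G|\le |F\cap G\cap C|-|C\sm(F\cup G)|$, which also forces $|F\cap G|+k=r_F+r_G$), and then match the resulting pairs --- a $(d-1)$-face lying in both hyperplanes together with the $d$-face of $\Delta_{k,n}$ it splits --- against the defining expression of $w_{\alpha,\beta,a,b}(t)$, with the $1/t$ shift accounted for. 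Your local-model picture is the right one and agrees with the paper's, but without this identification the proof of \eqref{eq:twosplits_w} (including the degenerate disconnected case $F\cap G=\varnothing$, $k=r_F+r_G$, which you never address) is incomplete.
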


\begin{proof} 
    Note that either the matroid $\N$ is connected or it is the direct sum $\U_{r_F,|F|}\oplus\U_{r_G,|G|}$.
    The matroid polytope of the latter is $\mathscr{P}(\U_{r_F,|F|}\oplus\U_{r_G,|G|}) = \Delta_{r_F,|F|}\times\Delta_{r_G,|G|}$ with $f$-polynomial $f_{\Delta_{r_F,|F|}}(t)\cdot f_{\Delta_{r_G,|G|}}(t)$. Moreover, in this case $|F\cap G| = 0$ and $k=r_F+r_G$. Thus  formula~\eqref{eq:twosplits_w} applies by definition of $w_{r_F,r_G,|F|,|G|}$. From now on, we assume that $\N$ is connected. 
    
    We compare the faces of the polytope $\mathscr{P}(\N)$ with those of $\Delta_{k,n}$.
    By \cite[Proposition 7]{joswig-schroter} every $d$-face of $\mathscr{P}(\N)=\mathscr{P}(\LLF{F})\cap\mathscr{P}(\LLF{G})$, and also $\mathscr{P}_F:=\mathscr{P}(\LLF{F})$ or $\mathscr{P}_G:=\mathscr{P}(\LLF{G})$ lies in a $d$-face of the hypersimplex $\Delta_{k,n}$ or in at least one of the hyperplanes $H_F=\{x\in\RR^n\,|\,\sum_{i\in F} x_i = r_F\}$ and $H_G=\{x\in\RR^n\,|\,\sum_{i\in G} x_i = r_G\}$. 
    Notice further that by \cite[Proposition 14]{joswig-schroter} we have that $|F\cap G| + k \leq r_F+r_G$.
    Thus the statement of the lemma indeed covers all possible cases.
    Furthermore, this shows that there is no point in the hypersimplex $\Delta_{k,n}$ violating both of the inequalities $\sum_{i\in F} x_i \leq r_F$ and $\sum_{i\in G} x_i \leq r_G$ simultaneously. We will use this crucial fact several times further below without recalling it explicitly. A first consequence is that the two hyperplanes $H_F$ and $H_G$ split a face of $\Delta_{k,n}$ into at most three maximal dimensional polytopes.
    
    Now, let us analyze the various cases.
    If $Q'$ is a $d$-dimensional face of $\mathscr{P}(\N)$ and not contained in a $d$-face of the hypersimplex $\Delta_{k,n}$, then either either $Q'$ lies in exactly one of the hyperplanes $H_F$ and $H_G$, or in both. If it is in exactly one of them, say $H_F$, then $Q'$ is a $d$-dimensional face of $\mathscr{P}_F$ and not a $d$-dimensional face of $\mathscr{P}_G$. If the $d$-dimensional face $Q'$ lies in both hyperplanes then $Q'$ lies in a $(d+1)$-dimensional face $Q$ of $\Delta_{k,n}$ which is subdivided into two parts. We discuss this situation in detail further below in this proof.
    
    Let us now assume $Q$ is a $d$-dimensional face of $\Delta_{k,n}$. We will go through the three possibilities of how many cells $Q$ is subdivided into by $H_F$ and $H_G$.
    If $Q$ remains undivided then either it is a face of $\mathscr{P}(\N)$ and also of both
    $\mathscr{P}_F$ and $\mathscr{P}_G$, or
    it is not a face of $\mathscr{P}(\N)$ and thus a face of exactly one of the two polyhedra 
    $\mathscr{P}_F$ and $\mathscr{P}_G$.
    In the case that the face $Q$ is subdivided into three parts of dimension $d$, it contributes a $d$-face to each of the four polytopes as well. It remains to analyze the situation in which $Q$ is subdivided into two polytopes $Q'$ and $Q''$. It cannot happen that both of these polytopes are faces of $\mathscr{P}(\N)$. If one of them, say $Q'$, is a face of $\mathscr{P}(\N)$ then $Q'$ is a face of one of the polytopes $\mathscr{P}_F$ and $\mathscr{P}_G$, while $Q$ is a face of the other. In total $Q$ and $Q'$ contribute a $d$-dimensional face to each of the four polytopes.
    If neither $Q'$ nor $Q''$ is a face of $\mathscr{P}(\N)$ then $Q'$ and $Q''$ meet in a $(d-1)$-face $\widetilde{Q}\subsetneq Q$ which is a face of the three polytopes 
    $\mathscr{P}(\N)$, $\mathscr{P}_F$ and $\mathscr{P}_G$. Moreover, the $d$-dimensional face $Q'$ is a face of one of the two polytopes $\mathscr{P}_F$ and $\mathscr{P}_G$ while $Q''$ is a $d$-dimensional face of the other one.
    The $(d-1)$-dimensional face $\widetilde{Q}$ is not a face of the hypersimplex $\Delta_{k,n}$ and sits in both hyperplanes $H_F$ and $H_G$.
    Every $(d-1)$-dimensional face of $\mathscr{P}(\N)$ that is not a face of $\Delta_{k,n}$ but lies in both $H_F$ and $H_G$ must be contained in a $d$-face of $\Delta_{k,n}$ as we mentioned at the beginning of this proof.
    Let us investigate the situation further.
    There must exist four pairwise disjoint sets $A,B,C,D$ that partition the ground set $[n]$ such that
    \[
    Q = \Delta_{|C|,C}\times \Delta_{k-|C|,A\cup B}\times\Delta_{0,|D|}
    \]
    where $C$ consists of all coordinates which are one and $D$ of those which are $0$, and
    \[
    \widetilde{Q} = \Delta_{|C|,C}\times \Delta_{\alpha,A}\times\Delta_{\beta,B}\times\Delta_{0,|D|}
    \]
    where $\alpha = r_F-|C\cap F|$, $\beta = r_G-|C\cap G|$, $A\subseteq F\smallsetminus G$ and  $B\subseteq G\smallsetminus F$.
    The inclusions follow (up to interchanging the roles of $A$ and $B$) from the fact that both hyperplanes $H_F$ and $H_G$ induce the same split of $\Delta_{\alpha+\beta,A\cup B}$.
    From this we get $k-|C| = \alpha + \beta =r_F-|F\cap C|+r_G-|G\cap C|$ and hence
    \begin{align*}
        |F\cap G|   \leq r_F+r_G-k
                    = |F\cap C| + |G\cap C| - |C|
                    = |F\cap G\cap C| - |C\smallsetminus(F\cup G)|\enspace .
    \end{align*}
    We obtain $F\cap G \subseteq C \subseteq F\cup G$, and also the equality $|F\cap G|+k = r_F+r_G$.
    Furthermore, every choice of such $C$, $A\subseteq F\smallsetminus G$ and $B\subseteq G\smallsetminus F$ leads to a face that sits in both hyperplanes and in a higher dimensional face of $\Delta_{k,n}$. Hence, for the inclusion-wise maximal such face we have to pick $C=F\cap G$ and $D=[n]\smallsetminus(F\cup G)$.
    We see that the expression
    \[
    w_{r_F-|C|,r_G-|C|,|F|-|C|,|G|-|C|}(t)
    \]
    counts exactly the two types of faces.
    In summary, our comparison verifies the formulas of equations \eqref{eq:twosplits} and \eqref{eq:twosplits_w} and thus the proof is complete.
\end{proof}

\subsection{Face counting of split matroids}

For a connected split matroid $\M$, let us define the following numbers that we have already mentioned in the introduction. The number of stressed subsets with non-empty cover having rank $r$ and size $h$, denoted by $\lambda_{r,h}$ --- recall that by \cite[Proposition~3.9]{ferroni-schroter}, in a connected split matroid this is the same as the number of proper non-empty cyclic flats of rank $r$ and size $h$. We also need the numbers $\mu_{\alpha,\beta,a,b}$ of (unordered) modular pairs $\{F_1,F_2\}$ of proper non-empty cyclic flats, i.e., $F_1$ and $F_2$ fulfilling the \emph{modularity property},
\begin{equation}\label{eq:star-property-pairs-of-flats}
    \rank(F_1) + \rank(F_2)= \rank(F_1\cap F_2) + \rank(F_1\cup F_2) , \tag{$\star$}
\end{equation}
where the indices denote the following quantities:
\begin{align*}
        a &=|F_1\smallsetminus F_2|,& \alpha &= \rank F_1 - \rank(F_1\cap F_2)\\
        b &=|F_2\smallsetminus F_1|,& \beta&=\rank F_2 - \rank(F_1\cap F_2) \enspace .
\end{align*}
Note that the set $F_1\cap F_2 \subsetneq F_1 \subsetneq [n]$ can not contain a circuit if $\M$ is a connected split matroid, thus it is an independent set, i.e., $\rank(F_1\cap F_2)=|F_1\cap F_2|$.

In terms of these numbers and variables, our main result is the following theorem.
\begin{theorem}\label{thm:main} 
    Let $\M$ be a connected split matroid of rank $k$ on $n$ elements.
    The number of faces of its base polytope $\mathscr{P}(\M)$ is given by the polynomials
    \begin{equation}\label{eq:main-result}
        f_{\mathscr{P}(\M)}(t) = 
        f_{\Delta_{k,n}}(t)-\sum_{r,h} \lambda_{r,h}\cdot u_{r,k,h,n}(t)-\sum_{\alpha,\beta,a,b} \mu_{\alpha,\beta,a,b}\cdot w_{\alpha,\beta,a,b}(t)\;,
    \end{equation}
    where the first sum ranges over all values with $0<r<h<n$ and the second sum ranges over the values $0<\alpha<a$, $0<\beta<b$ for which either $a<b$ or $a=b$ and $\alpha\leq\beta$.
\end{theorem}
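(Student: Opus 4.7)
The plan is to establish the formula by a face-by-face analysis that extends Lemma \ref{lem:key} from the two-cyclic-flat case to an arbitrary connected split matroid. First, I would write the base polytope as the intersection
\[
\mathscr{P}(\M) = \Delta_{k,n} \cap \bigcap_{F} \mathscr{P}(\LLF{F}),
\]
where $F$ ranges over the proper non-empty cyclic flats of $\M$. This follows from the split hyperplane description of elementary split matroids in \cite[Corollary~3.29]{ferroni-schroter} together with \cite[Theorem~11]{berczi}. Each cyclic flat $F$ contributes a hyperplane $H_F = \{x \in \mathbb{R}^n : \sum_{i \in F} x_i = \rk F\}$, and all the faces of $\mathscr{P}(\M)$ which are not faces of $\Delta_{k,n}$ must lie on at least one such hyperplane.

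Second, I would classify each face $Q'$ of $\mathscr{P}(\M)$ by the smallest face $Q$ of $\Delta_{k,n}$ containing it, and by the set of hyperplanes $H_F$ containing $Q'$, splitting the enumeration according to the codimension $\dim Q - \dim Q' \in \{0,1,2,\dots\}$. Using the structural inequality $|F_1 \cap F_2| + k \leq \rk F_1 + \rk F_2$ for any two cyclic flats (from \cite[Proposition~14]{joswig-schroter}), and its consequence that no point of $\Delta_{k,n}$ simultaneously violates two split inequalities, I would show that this codimension never exceeds two. Faces of codimension zero correspond to faces of $\Delta_{k,n}$ that survive every cut, faces of codimension one correspond to faces created on a single $H_F$ (and are collectively encoded by the $u$-polynomials once one sums over all cyclic flats, yielding $\sum_{r,h} \lambda_{r,h}\, u_{r,k,h,n}(t)$), and faces of codimension two arise only from pairs of cyclic flats which must be modular, by the very argument that produced the $w$ term in Lemma \ref{lem:key}. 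Summing over all unordered modular pairs $\{F_1, F_2\}$ and normalizing with the stated ordering convention on $(a, b, \alpha, \beta)$ then accounts for exactly the term $\sum_{\alpha,\beta,a,b} \mu_{\alpha,\beta,a,b}\, w_{\alpha,\beta,a,b}(t)$.

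The main obstacle will be packaging this codimension analysis rigorously, so as to verify simultaneously that (a) the codimension is bounded by two and (b) the contributions of the pairwise interactions do not overlap. I expect the cleanest way forward is to run an induction on the number of cyclic flats: the base case of one cyclic flat is the definition of $u_{r,k,h,n}(t)$, the base case of two is exactly Lemma \ref{lem:key}, and the inductive step would track the change in $f$-polynomial when intersecting a partial intersection $\mathscr{P}^{(m-1)}$ with $\mathscr{P}(\LLF{F_m})$. The split condition guarantees that the violating regions of $\Delta_{k,n}$ associated with distinct cyclic flats are disjoint, so the $u$-contribution of $F_m$ is not disturbed by the previous cuts, and the only genuinely new faces appear along $H_{F_m} \cap H_{F_i}$ for those $F_i$ modular to $F_m$; by Lemma \ref{lem:key} each such pair adds precisely one $w$-term and produces no higher-order interactions. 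Once this is in place the telescoping sum yields equation~\eqref{eq:main-result}.
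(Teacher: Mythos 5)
Your overall strategy is the same as the paper's: intersect $\Delta_{k,n}$ with the half-spaces of the split hyperplanes $H_F$, compare faces of $\mathscr{P}(\M)$ with faces of the hypersimplex using the fact that no point of $\Delta_{k,n}$ violates two of the inequalities, charge single-hyperplane effects to the polynomials $u_{r,k,h,n}(t)$ and pair effects to $w_{\alpha,\beta,a,b}(t)$ via Lemma~\ref{lem:key}. However, as written your classification scheme is off in a way that matters. There are no faces of codimension two relative to the minimal containing face of $\Delta_{k,n}$: the ``pair'' faces are $(d-1)$-dimensional sections $\widetilde{Q}$ cutting through the relative interior of a $d$-face $Q$ of $\Delta_{k,n}$, lying simultaneously on two hyperplanes $H_F$ and $H_G$ which induce the \emph{same} hyperplane section of $Q$ (they do not meet transversally inside $Q$, precisely because no point violates both inequalities). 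This non-transversality is not a technicality — it is exactly what forces the equality in \eqref{eq:star-property-pairs-of-flats}. Consequently your clean three-way split (codim $0$ $\leftrightarrow$ surviving faces, codim $1$ $\leftrightarrow$ one $u$-term each, codim $2$ $\leftrightarrow$ $w$-terms) does not match the actual bookkeeping: the face $Q$ in the pair situation is neither kept nor entirely cut off, it is seen by \emph{both} $u$-polynomials of the two flats involved, and the polynomial $w_{\alpha,\beta,a,b}(t)$ is designed to repair precisely this double count together with the new face $\widetilde{Q}$. Following your scheme literally would mis-assign these faces.

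The two assertions that carry the weight of the theorem are also left unproved. First, ``pairs which must be modular, contributing exactly one $w$-term'' requires an argument: in the paper one takes an inclusion-wise maximal such face $\overline{Q}$, writes it inside $\Delta_{|C|,C}\times\Delta_{k-|C|,F\cup G}\times\Delta_{0,D}$, passes to the minor $(\M\smallsetminus D)/C$, and applies Lemma~\ref{lem:key} there; combined with the split-matroid inequality $|F\cap G|+k\leq \rk F+\rk G$ this pins down $C=F\cap G$, $D=[n]\smallsetminus(F\cup G)$, forces $k-|C|=\rk(F\cup G)$ and hence modularity, and identifies the parameters $(\alpha,\beta,a,b)$ (using that $F\cap G$ is independent) so that the contribution is exactly $w_{\alpha,\beta,a,b}(t)$ per unordered pair. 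Second, ``no higher-order interactions'' is not automatic in your telescoping step: you must rule out that a section lying on two hyperplanes lies on a third (equivalently, that adding $H_{F_m}$ disturbs faces created on $H_{F_i}\cap H_{F_j}$). The paper does this by observing that such a section separates two vertices of $Q$, each of which violates exactly one inequality, so the section is a face of exactly two of the polytopes $\mathscr{P}(\LLF{F_j})$. Your induction could be made to work, but each inductive step would need exactly this local analysis (the ``$u$-contribution of $F_m$ is not disturbed'' claim is false for the faces of $\Delta_{k,n}$ that are cut by both $H_{F_m}$ and an earlier hyperplane), so the induction buys no simplification over the paper's direct global comparison. Until these points are supplied, the proposal is a correct outline of the right approach rather than a proof.
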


Before moving towards the proof of this result, let us digress about the meaning of its statement. On one hand, note that the polynomials $f_{\Delta_{k,n}}(t)$, $u_{r,k,h,n}(t)$ and $w_{\alpha,\beta,a,b}(t)$ can be precomputed for all the occurring instances of the variables which appear as subindices. The first non-trivial fact that is deduced by our statement is that in addition to the parameters $\lambda_{r,h}$, which always appear in the computation of a valuative invariant, the precise additional matroidal datum needed to compute the $f$-vector consists of the numbers $\mu_{\alpha,\beta,a,b}$. Surprisingly, the last sum in equation~\eqref{eq:main-result} does not take into consideration the rank nor the size of the matroid $\M$ itself, only the intersection data for the modular pairs of flats. The second non-trivial fact is that it explains how to put together this information in order to effectively computing the $f$-vector of $\mathscr{P}(\M)$ for a split matroid, circumventing the necessity of constructing the polytope.

\begin{proof}[Proof of Theorem~\ref{thm:main}.]
    We follow the guidance of the proof of Lemma~\ref{lem:key} and compare the faces of $\mathscr{P}(\M)$ with those of $\Delta_{k,n}$ taking into account the polytopes $\mathscr{P}(\LLF{F_j})$ for the various cyclic flats $F_j$.
    We recall that there is no point in $\Delta_{k,n}$ that violates more than one of the inequalities $\sum_{i\in F_j} x_i \leq \rank F_j$.
    
    Now, if $Q$ is a $d$-face of $\Delta_{k,n}$ that contains a $d$-face of $\mathscr{P}(\M)$ then every polytope $\mathscr{P}(\LLF{F_i})$ has a $d$-face that is contained in $Q$. Hence, these faces do not contribute to any $u_{r,k,h,n}$ or $w_{\alpha,\beta,a,b}$.
    If $Q$ is a $d$-face of $\Delta_{k,n}$ whose interior has no point in common with $\mathscr{P}(\M)$, then all these points lie beyond exactly one of the split hyperplanes $\sum_{i\in F_j} x_i = \rank F_j$. Therefore, $Q$ contributes $1$ to $f_{\Delta_{k,n}}(t)$ and to $u_{\rank_{F_j},k,|F_j|,n}(t)$, but not to the other polynomials.
    It remains again to look at the $(d-1)$-faces $\widetilde{Q}$ of $\mathscr{P}(\M)$ that are contained in the interior of the $d$-face $Q$ of $\Delta_{k,n}$.
    Note that $\widetilde{Q}$ is a hyperplane in $Q$ that separates two of the vertices of $Q$. Each of the two vertices can only violate one of the inequalities, thus
    $\widetilde{Q}$ is a face of exactly two of the polytopes $\mathscr{P}(\LLF{F_j})$.
    
    Let us assume that $\overline{Q}$ is inclusion-wise maximal among all these $(d-1)$-faces of $\mathscr{P}(\M)$ in the relative interior of $d$-faces of $\Delta_{k,n}$.
    Then there must be pairwise disjoint sets $C$ and $D$ in the complement of $F\cup G$ such that $\overline{Q}$ is  a face of
    \[
        \Delta_{|C|,C}\times\Delta_{k-|C|,F\cup G}\times\Delta_{0,D}
    \]
    where $F$ and $G$ are the two cyclic flats for which $\overline{Q}$ is a face of $\LLF{F}$ and $\LLF{G}$.
    It follows that $k-|C|\leq \rank_{\M}(F\cup G)$ as $\overline{Q}$ is a face of $\mathscr{P}(\M)$.
    Furthermore, applying Lemma~\ref{lem:key} to the matroid $(\M\smallsetminus D) / C$, i.e., the contraction of $C$ and the deletion of $D$ in $\M$,  which is a split matroid of rank $k-|C|$ on $F\cup G$, yields that the hyperplanes of $F$ and $G$ coincide only if $k-|C|=\rank F + \rank G - |F\cap G|$.
    Moreover, because $\M$ is a connected split matroid we get
    \[
        |F\cap G| + \rank(F\cup G) \leq |F\cap G| + k \leq \rank F + \rank G \enspace .
    \]
    Thus $k-|C| = \rank F\cup G$ and $|F\cap G| = \rank F + \rank G - \rank(F\cup G)$. 
    Furthermore, Lemma~\ref{lem:key} shows also that the faces $\widetilde{Q}$ and $Q$ contribute a summand $t^{d-1}$ and $t^{d}$ to the corresponding $w_{\alpha,\beta,a,b}(t)$. This completes our proof.
\end{proof}

\begin{example}
    Let us take a look again at Example~\ref{example:f-non-valuative}. The matroids $\N_1$ and $\N_2$ are sparse paving, have rank $k=3$ and size $n=6$. In each case the proper non-empty cyclic flats are exactly the non-bases, yielding for both matroids $\lambda_{2,3}=2$. One can compute the corresponding polynomial, $u_{2,3,3,6}(t) = 1+9t+9t^2-t^4$. In $\N_1$, the intersection of the only pair of proper non-empty cyclic flats, $F_1=\{1,2,3\}$ and $F_2=\{4,5,6\}$, does not satisfy the property \eqref{eq:star-property-pairs-of-flats}, because $\rk(F_1\cap F_2)+\rk(F_1\cup F_2)=0+3$, whereas $\rk(F_1)+\rk(F_2) = 2 + 2 = 4$.
    
    For $\N_2$, the situation is different, as $F_1 = \{1,2,3\}$ and $F_2=\{3,4,5\}$ indeed satisfy \eqref{eq:star-property-pairs-of-flats}, and we have $a = |F_1\smallsetminus F_2|=2$, $b = |F_2\smallsetminus F_1|=2$, $\alpha = \rk(F_1)-|F_1\cap F_2| =  2 - 1 = 1$, and $\beta=\rk(F_2)-|F_1\cap F_2| = 2 - 1=1$, so that $\mu_{1,1,2,2}=1$ and we have to subratct $w_{1,1,2,2}(t) = t^2+t^3$ to obtain the correct $f$-polynomial, as we expected.
\end{example}

\subsection{Explicit formulas}
The polynomials $u_{r,k,h,n}(t)$ and $w_{\alpha,\beta,a,b}(t)$ in Theorem \ref{thm:main} are defined in terms of $f$-vectors of specific matroid polytopes. In this subsection we will present explicit descriptions for these polynomials, enabling us to do the face enumeration of a split matroid polytope, without any convex hull or face lattice computation.
To express the formulas in a compact form, we will make use of multinomial coefficients. Let $i,j,\ell$ be non negative integers, then
\[
    \binom{i+j+\ell}{i,j} := \binom{i+j+\ell}{i,j,\ell} = \frac{(i+j+\ell)!}{i!j!\ell!}\enspace .
\]

We begin with an explicit formula for the polynomials $w_{\alpha,\beta,a,b}(t)$.
\begin{proposition}\label{prop:nice-formula-w}
    For any $0 < \alpha < a$ and $0 < \beta < b$, the following formula holds:
    \[
    w_{\alpha,\beta,a,b}(t) =
    \sum_{i=0}^{a-\alpha-1} \sum_{j=0}^{\alpha-1}
    \sum_{i'=0}^{b-\beta-1} \sum_{j'=0}^{\beta-1} 
    \binom{a}{i,j}\binom{b}{i',j'} \cdot (1+t) \cdot t^{a+b-i-j-i'-j'-2} \enspace .
    \]
\end{proposition}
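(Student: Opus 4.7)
The plan is to exploit the face-by-face bookkeeping carried out inside the proof of Lemma~\ref{lem:key}, specialized to the ``minimal'' configuration in which $|F\cap G|=0$. Concretely, consider the matroid $\N = \U_{\alpha,a}\oplus\U_{\beta,b}$ on $F\sqcup G = [a+b]$, with $F=[a]$ of rank $\alpha$ and $G=[a+1,a+b]$ of rank $\beta$. This fits the setting of Lemma~\ref{lem:key} with $c=0$, $k=\alpha+\beta$, $n=a+b$, and equation~\eqref{eq:twosplits_w} together with the observation $\mathscr{P}(\N)=\Delta_{\alpha,a}\times\Delta_{\beta,b}$ reduces the problem to a direct enumeration of the faces that contribute to $w_{\alpha,\beta,a,b}(t)$.

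First I would isolate the contributing faces. Going face-by-face across the four polytopes $\mathscr{P}(\LLF{F})$, $\mathscr{P}(\LLF{G})$, $\Delta_{\alpha+\beta, a+b}$, and $\mathscr{P}(\N)$ appearing in the definition of $w_{\alpha,\beta,a,b}(t)$, everything cancels except for pairs $(Q,\widetilde{Q})$, where $Q$ is a $d$-face of $\Delta_{\alpha+\beta,a+b}$ that is split into exactly two pieces by the coincident restrictions $H_F \cap Q = H_G\cap Q$, and $\widetilde{Q} = Q\cap H_F$ is the shared interior $(d-1)$-facet. A direct sign tally, matching the one in the proof of Lemma~\ref{lem:key}, shows that each such pair contributes exactly $t^{d-1}+t^d = (1+t)t^{d-1}$ to $w_{\alpha,\beta,a,b}(t)$.

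Next I would parameterize and count these pairs. Every $d$-face of $\Delta_{\alpha+\beta,a+b}$ corresponds to a partition of $[a+b]$ into a set $C$ of coordinates fixed to $1$, a set $D$ of coordinates fixed to $0$, and the remaining free set. Writing $j=|C\cap F|$, $i=|D\cap F|$, $j'=|C\cap G|$, $i'=|D\cap G|$, and setting $A = F\smallsetminus(C\cup D)$, $B = G\smallsetminus(C\cup D)$ so that $|A| = a-i-j$ and $|B| = b-i'-j'$, the restriction of $H_F$ to $Q$ reads $\sum_{p\in A} x_p = \alpha - j$. An elementary analysis of the possible values of $\sum_{p\in A} x_p$ on $\Delta_{k-|C|, A\cup B}$ shows that this genuinely splits $Q$ into two pieces---with $H_G$ inducing the identical split since $F\cap G = \varnothing$---precisely when $0\leq j\leq\alpha-1$, $0\leq i\leq a-\alpha-1$, $0\leq j'\leq\beta-1$, and $0\leq i'\leq b-\beta-1$, matching the index set in the proposition.

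Finally, for each valid quadruple $(i,j,i',j')$ the number of ways to choose the four disjoint subsets with prescribed sizes is $\binom{a}{i,j}\binom{b}{i',j'}$, and the dimension is $d = a+b-i-j-i'-j'-1$. Multiplying by the per-pair contribution $(1+t)t^{d-1}$ and summing over the four indices yields the stated formula. The main technical obstacle is the first step---verifying that only the two-piece-split pairs survive in the signed combination defining $w_{\alpha,\beta,a,b}(t)$---although the argument is essentially implicit in the proof of Lemma~\ref{lem:key} and only needs to be unpacked carefully here.
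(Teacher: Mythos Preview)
Your proposal is correct and follows essentially the same approach as the paper's own proof: revisit the face-by-face analysis of Lemma~\ref{lem:key} in the configuration with $|F\cap G|=0$, observe that the surviving contributions to $w_{\alpha,\beta,a,b}(t)$ are the pairs $(Q,\widetilde Q)$ where the split hyperplane genuinely bisects a face $Q$ of $\Delta_{\alpha+\beta,a+b}$, and then parameterize these pairs via deletions and contractions (your sets $D$ and $C$) subject to the four inequalities that keep the split nontrivial. Your write-up is somewhat more explicit than the paper's---which simply says ``deleting $i$ and contracting $j$ of the first $a$ elements'' while the split hyperplane ``still induces a split''---but the argument is the same.
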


\begin{proof}
    We are revisiting the proof of Lemma \ref{lem:key}. We observe the following: 
    the faces counted by $w_{\alpha,\beta,a,b}(t)$ come in pairs, namely the faces of $\Delta_{\alpha,a}\times\Delta_{\beta,b}$ that do not lie in the boundary of $\Delta_{\alpha+\beta,a+b}$ and the faces of $\Delta_{k,n}$ one dimension higher.
    Furthermore, these faces are obtained by deleting and contracting elements such that the hyperplane $\sum_{\ell=1}^a x_\ell = \alpha$ still induces a split.
    Deleting $i$ and contracting $j$ of the first $a$ elements as well as deleting $i'$ and contracting $j'$ of the other $b$ elements then leads to such a pair of faces of dimension $a+b-i-j-i'-j'-2$ and $a+b-i-j-i'-j'-1$. Expressing the ways of choosing the elements with multinomial coefficients leads to the desired formula.
\end{proof}

For the polynomials $u_{r,k,h,n}(t)$ we provide the following formula.

\begin{proposition}\label{prop:nice-formula-u} 
For any $0<r<k<n$ and $r<h<n$ the following formula holds 
        \[
        u_{r,k,h,n}(t) = p_{r,k,h,n}(t)  - p'_{r,h}(t)\cdot p'_{k-r,n-h}(t)\cdot(1+t) + \sum_{i = r+1}^k \binom{h}{i}\binom{n-h}{k-i}
    \]
    where $p_{r,h}'(t) =f_{\Delta_{r,h}}(t)-\binom{h}{r}$ and
        \[
        p_{r,k,h,n}(t) = \sum_{j=0}^{h-r-1} \sum_{i=0}^{\min\{h-j,k-1\}} \sum_{\ell = 0}^{\min\{k-i-1,k-r-1\}} \sum_{m=0}^{\min\{n-h-\ell,n-k-j-1\}} 
        \binom{h}{i,j}\binom{n-h}{\ell,m} \cdot t^{n-1-i-j-\ell-m}.
    \]
\end{proposition}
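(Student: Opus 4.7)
The strategy is a face-by-face comparison of $\Delta_{k,n}$ with the Schubert elementary split matroid polytope $\mathscr{P}(\LL_{k-r,k,n-h,n})=\Delta_{k,n}\cap\{x:\sum_{i\in F}x_i\leq r\}$ for a fixed $h$-subset $F\subseteq[n]$; see \eqref{eq:cuspidal}. I would parametrize every face of $\Delta_{k,n}$ by a tuple $(i,j,\ell,m)=(|B\cap F|,|C\cap F|,|B\cap F^c|,|C\cap F^c|)$, where $B,C\subseteq[n]$ are the coordinates fixed to $1$ and to $0$ respectively. A positive-dimensional such face has dimension $n-1-i-j-\ell-m$, and on it the linear form $\sum_{i\in F}x_i$ varies in the interval $[\max(i,k+h+m-n),\,\min(h-j,k-\ell)]$.

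The faces of $\Delta_{k,n}$ lost in passing to $\mathscr{P}(\LL_{k-r,k,n-h,n})$ are exactly those on which the maximum of $\sum_{i\in F}x_i$ exceeds $r$, equivalently $j\leq h-r-1$ and $\ell\leq k-r-1$. Splitting by dimension, the lost vertices (characterized by $i>r$) contribute $\sum_{i=r+1}^{k}\binom{h}{i}\binom{n-h}{k-i}$, while summing the multinomial weights $\binom{h}{i,j}\binom{n-h}{\ell,m}\,t^{n-1-i-j-\ell-m}$ over the positive-dimensional lost faces, subject to the positivity conditions $i+\ell\leq k-1,\,j+m\leq n-k-1$ and the validity conditions $i+j\leq h,\,\ell+m\leq n-h$, produces precisely $p_{r,k,h,n}(t)$.

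For the faces of $\mathscr{P}(\LL_{k-r,k,n-h,n})$ that are not inherited from $\Delta_{k,n}$, the structural claim is that they are in bijection with pairs $(Q,\varepsilon)$, where $Q$ runs over the faces of $\Delta_{k,n}$ properly cut by the hyperplane $H=\{\sum_{i\in F}x_i=r\}$ and $\varepsilon\in\{\leq,=\}$: each such $Q$ yields a new face $Q\cap\{\sum_{i\in F}x_i\leq r\}$ of dimension $\dim Q$ and a new face $Q\cap H$ of dimension $\dim Q-1$. Completeness of this description follows from the identification $\Delta_{k,n}\cap H\cong\Delta_{r,h}\times\Delta_{k-r,n-h}$, together with the observation that a product face $P_1\times P_2$ is already a face of $\Delta_{k,n}$ if and only if at least one of $P_1,P_2$ is a vertex, so that each product face in the relative interior of $\Delta_{k,n}$ equals $Q\cap H$ for a unique properly cut $Q$. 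The parameters of such $Q$ satisfy $i\leq r-1,\,j\leq h-r-1,\,\ell\leq k-r-1,\,m\leq n-h-k+r-1$; since these ranges decouple in $(i,j)$ and $(\ell,m)$, summing the contributions $(1+t)\,t^{\dim Q-1}$ yields $(1+t)\,p'_{r,h}(t)\,p'_{k-r,n-h}(t)$.

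Subtracting the new-face polynomial from the lost-face polynomial then produces the claimed identity. The main technical obstacle I anticipate is justifying the bijection in the third step: one must handle carefully the tangent situations in which $\sum_{i\in F}x_i$ attains the value $r$ only at its minimum or its maximum on some face $Q$, verifying that the corresponding cross-section $Q\cap H$ is then itself a face of $\Delta_{k,n}$ (and thus already counted among inherited faces) rather than contributing any additional new face, so that exactly $(1+t)\,p'_{r,h}(t)\,p'_{k-r,n-h}(t)$ enumerates the genuine novelties.
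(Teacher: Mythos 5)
Your proposal is correct and follows essentially the same route as the paper: compare the faces of $\Delta_{k,n}$ lost under the split (counted by fixing coordinates to $0$ or $1$, giving the vertex term plus $p_{r,k,h,n}(t)$) against the new faces of $\mathscr{P}(\LL_{k-r,k,n-h,n})$, namely the truncations of properly cut faces together with the hyperplane sections identified with products of positive-dimensional faces of $\Delta_{r,h}\times\Delta_{k-r,n-h}$, which yields the term $(1+t)\,p'_{r,h}(t)\,p'_{k-r,n-h}(t)$. The tangency issue you flag is handled correctly by your observation that if the hyperplane meets a face only at the minimum (or maximum) of $\sum_{i\in F}x_i$, the section is obtained by fixing the remaining free coordinates on one side and is therefore already a face of $\Delta_{k,n}$, so it contributes nothing new.
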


\begin{proof}
    We begin by counting the faces of $\Delta_{k,n}$ that are not faces of $\mathscr{P}(\LL_{k-r,k,n-h,n})$.
    There are two types of such faces. Those that are entirely beyond the splitting hyperplane, and the ones that are separated by the hyperplane.
    Clearly there are $\sum_{i=r+1}^k\binom{h}{i}\binom{n-h}{k-i}$ vertices of $\Delta_{k,n}$ that are cut off by the hyperplane $\sum_{i=1}^h x_i = r$.
    The other faces are precisely those containing such a vertex. Hence we count faces for which more than $r$ of the first $h$ coordinates can be chosen to be one, where in total we have $k$ coordinates equal to one. We do so by picking coordinates whose value we fix, the remaining coordinates can either be zero or one, and the total number of ones is precisely $k$. Moreover, we do not want to fix all coordinates, i.e., we can fix at most $k-1$ coordinates to be equal to one and at most $n-k-1$ of them to be zero.
    
    Say we fix $i$ coordinates in $\{1,\ldots, h\}$ to be one and $j$ to be zero. Then $0 \leq j< h-r$ and $0\leq i < k$ as well as $i+j\leq h$, since there are only $h$ coordinates to select from. Similarly, we fix $\ell$ ones in the last $n-h$ coordinates and $m\geq 0$ zeros.
    Then $0\leq\ell < k-i$ and $\ell<k-r$ as it must be allowed to move another $1$ to the first coordinates to get there more than $r$ out of the $k$ ones. 
    Furthermore, $\ell+m\leq n-h$ as we select $\ell+m$ out of $\{h+1,\ldots,n\}$ and $m+j < n-k$ because otherwise we would have fixed all zeros and hence also the ones.
    Clearly, there are $\binom{h}{i,j}$ ways to select the $i$ and $j$ first coordinates and $\binom{h}{\ell,m}$ ways to fix the $\ell$ ones and $m$ zeros in the last coordinates. Every fixed coordinate reduces the dimension and hence counts $p_{r,k,h,n}(t)$ these type of faces that are not vertices.
    
    Now we count the $(d-1)$-faces of $\LL_{r,k,h,n}$ that split a $d$-face of $\Delta_{k,n}$. These are exactly the faces of $\Delta_{r,h}\times\Delta_{k-r,n-h}$ that contain a product of edges, i.e., are not of the from  
    $\Delta_{r,h}\times\{v\}$ or $\{w\}\times\Delta_{k-r,n-h}$ for vertices $v$ of $\Delta_{k-r,n-h}$ and $w$ of $\Delta_{k,h}$. Thus these faces are enumerated by 
    the polynomial $p'_{r,h}(t)\times p'_{k-r,n-h}(t)$.
    Each of these faces contributes $-t^{d-1}$ to $u_{r,k,h,n}(t)$. Furthermore, the $d$-faces of $\Delta_{k,n}$ that are separated by such a split do not contribute to $u_{r,k,h,n}(t)$. Therefore we subtract
    $p'_{r,h}(t)\cdot p'_{k-r,n-h}(t)\cdot(1+t)$ from $p_{r,k,h,n}(t)$ to obtain the polynomial $u_{r,k,h,n}(t)$.
\end{proof}

In the following we will specialize Theorem~\ref{thm:main} to some common and interesting classes of matroids. We begin with an example.

\begin{example}
    Let $\M$ be the projective geometry $\mathrm{PG}(2,3)$. This is a matroid on $n=13$ elements of rank $k=3$. It is split as it is in fact paving. This matroid has $13$ stressed hyperplanes, i.e., rank $k-1=2$ flats, all of which have cardinality $h=4$. In other words, we have $\lambda_{2,4} = 13$. In particular, to use the formula of Theorem~\ref{thm:main}, the polynomial
    \begin{align*}
    u_{2,3,4,13}(t) &= -\, t^{11} - 11\, t^{10} - 54\,t^{9} - 156\,t^{8} - 294\,t^{7} - 378\,t^{6}\\
    &\qquad - 336\,t^{5} - 195\,t^{4} + \, t^{3} + 166\,t^{2} + 114\,t + \,4
    \end{align*}
    is required. 
    Since projective geometries are modular matroids, any pair of distinct proper non-empty cyclic flats fulfills the property \eqref{eq:star-property-pairs-of-flats}. Also, every pair of them intersect in a single element. Moreover, for every pair of these cyclic flats we have $a = |F_1\smallsetminus F_2| = 3$, and by symmetry $b = |F_1\smallsetminus F_2| = 3$. Additionally, $\alpha=\rk(F_1) - |F_1\cap F_2| = 2 - 1 = 1$ and again by symmetry $\beta=\rk(F_2) - |F_1\cap F_2| = 1$. Therefore there is a single non-vanishing coefficient $\mu_{\alpha,\beta,a,b}$ which is
    \[
        \mu_{1,1,3,3} = \binom{13}{2} = 78 \enspace.
    \]
    It remains to compute:
    \[w_{1,1,3,3}(t) = t^5 + 7 t^4 + 15 t^3 + 9 t^2\enspace .\]
    Now applying Theorem~\ref{thm:main}, we obtain:
    \begin{align*}
    f_{\mathscr{P}(\mathrm{PG}(2,3))}(t) &= f_{\Delta_{3,13}}(t) - 13 \, u_{2,3,4,13}(t) - 78 \, w_{1,1,3,3}(t)\\
    &= t^{12} + 39 \, t^{11} + 455\, t^{10} + 2704\, t^{9} + 9893\, t^{8} + 24414\, t^{7} + 42666\, t^{6} + \\
    &\qquad 54054\, t^{5} + 49608\, t^{4} + 31707\, t^{3} + 12870\, t^{2} + 2808\, t + 234 \enspace .
    \end{align*}
\end{example}

\subsection{Face numbers of sparse paving matroids}
As mentioned in the introduction, it is conjectured that almost all matroids are sparse paving; see \cite{mayhew} for the details.
Furthermore, many famous examples of matroids fall into this class; notable examples are the Fano matroid, the V\'amos matroid, the complete graph on four vertices, and the duals of each of them. Sparse paving and paving matroids are split, so we can make use of our main result. For sparse paving matroids all the proper cyclic flats are circuit hyperplanes, i.e., of rank $r=k-1$ and size $h=k$. Using these parameters, Theorem~\ref{thm:main} simplifies to the following statement.

\begin{corollary}\label{coro:sparse-paving}
    Let $\M$ be a connected sparse paving matroid of rank $k$ on $n$ elements having exactly
    $\lambda$ circuit-hyperplanes, and let $\mu$ count the pair of circuit-hyperplanes which have $k-2$ elements in common. Then
    \[      f_{\mathscr{P}(\M)}(t) = 
            f_{\Delta_{k,n}}(t)-\lambda\cdot u(t)- \mu\cdot (t^2+t^3)
    \]
    where $u(t)$ is given by 
    \begin{align*}
        1-k\cdot(n-k)\cdot(t+1)+\left((n-k)\cdot (t+1)^{k+1}
        + k\cdot(t+1)^{n-k+1}-n\cdot(t+1)\right)\cdot t^{-1}\\
        +\left((t+1)^k+(t+1)^{n-k}-(t+1)^n-1\right)\cdot t^{-2}\enspace .
    \end{align*}
\end{corollary}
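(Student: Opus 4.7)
The plan is to specialize Theorem~\ref{thm:main} to the sparse paving setting by identifying which indices contribute to each of its two sums and then explicitly evaluating the resulting polynomials using Propositions~\ref{prop:nice-formula-u} and~\ref{prop:nice-formula-w}.

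Since $\M$ is sparse paving of rank $k$, every proper non-empty cyclic flat is a circuit-hyperplane of rank $k-1$ and size $k$, so only $\lambda_{k-1,k}=\lambda$ contributes to the first sum. For the second sum, the condition \eqref{eq:star-property-pairs-of-flats} on two distinct circuit-hyperplanes $F_1, F_2$ turns out to be equivalent to $|F_1 \cap F_2| = k-2$: indeed, $\rk(F_1 \cup F_2) = k$ because $F_1$ is a hyperplane and $F_2 \not\subseteq F_1$, so \eqref{eq:star-property-pairs-of-flats} reduces to $\rk(F_1 \cap F_2) = k-2$, and in a paving matroid flats of rank at most $k-2$ are independent sets. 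Thus the modular pairs are precisely those counted by $\mu$, and each of them has $(\alpha,\beta,a,b) = (1,1,2,2)$; only $\mu_{1,1,2,2} = \mu$ survives. By Proposition~\ref{prop:nice-formula-w} all four summation ranges collapse to a single term and $w_{1,1,2,2}(t) = t^2+t^3$.

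To evaluate $u_{k-1,k,k,n}(t)$ via Proposition~\ref{prop:nice-formula-u}, the key observation is that with $r = k-1$ and $h = k$ the bounds on $j$ and $\ell$ in the definition of $p_{k-1,k,k,n}(t)$ force $j = \ell = 0$. The remaining double sum factors as a product of two truncated binomial sums, each with closed form $t^{-k}((t+1)^k - 1)$ and $t^{-(n-k)}((t+1)^{n-k}-1)$, respectively, yielding $p_{k-1,k,k,n}(t) = t^{-1}((t+1)^k - 1)((t+1)^{n-k} - 1)$. Because $\Delta_{k-1,k}$ and $\Delta_{1,n-k}$ are simplices, $p'_{k-1,k}(t) = t^{-1}((1+t)^k - 1 - kt)$ and $p'_{1,n-k}(t) = t^{-1}((1+t)^{n-k} - 1 - (n-k)t)$, and the trailing sum $\sum_{i=k}^{k}\binom{k}{k}\binom{n-k}{0}$ equals $1$.

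The main obstacle is the final algebraic simplification: substituting these closed forms into $u_{k-1,k,k,n}(t) = p_{k-1,k,k,n}(t) - (1+t)p'_{k-1,k}(t)p'_{1,n-k}(t) + 1$, expanding all products, and collecting the resulting monomials by the power of $t$ appearing in their denominator. The $t^{-2}$-contributions should assemble into $(t+1)^k + (t+1)^{n-k} - (t+1)^n - 1$, the $t^{-1}$-contributions into $(n-k)(t+1)^{k+1} + k(t+1)^{n-k+1} - n(t+1)$, and the polynomial part into $1 - k(n-k)(t+1)$, matching exactly the closed form for $u(t)$ in the statement.
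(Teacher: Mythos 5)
Your proposal is correct and follows essentially the same route as the paper: specialize Theorem~\ref{thm:main} with $r=k-1$, $h=k$, note that the modular pairs are exactly the pairs of circuit-hyperplanes meeting in $k-2$ elements so that only $\mu_{1,1,2,2}=\mu$ survives with $w_{1,1,2,2}(t)=t^2+t^3$, and then evaluate $u_{k-1,k,k,n}(t)$ from Proposition~\ref{prop:nice-formula-u} by collapsing the $j$ and $\ell$ sums, factoring $p_{k-1,k,k,n}(t)=t^{-1}((t+1)^k-1)((t+1)^{n-k}-1)$, and using the simplex formulas for $p'_{k-1,k}$ and $p'_{1,n-k}$. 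Your explicit verification that modularity \eqref{eq:star-property-pairs-of-flats} for two circuit-hyperplanes is equivalent to $|F_1\cap F_2|=k-2$ is a detail the paper leaves implicit, and your final algebraic assembly of the $t^{-2}$, $t^{-1}$, and polynomial parts matches the stated closed form.
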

\begin{proof}
    By substituting $r$ by $k-1$ and $h$ with $k$ in Proposition~\ref{prop:nice-formula-u} we obtain
    \[
        u(t) = u_{k-1,k,k,n}(t) = p_{k-1,k,k,n}(t)-p'_{k-1,k}(t)\cdot p'_{1,n-k}(t)\cdot(1+t)+1
    \]
    where $p_{k-1,k,k,n}(t)$ is equal to
    \begin{align*}
    \sum_{i=0}^{k-1}\binom{k}{i}\sum_{m=0}^{n-k-1} \binom{n-k}{m}\cdot t^{n-k-m}\cdot t^{k-i}\cdot t^{-1}
    &= \left((t+1)^k-1\right)\cdot\left((t+1)^{n-k}-1\right)\cdot t^{-1} \enspace .
    \end{align*}
    Furthermore, the polytope $\Delta_{k-1,k}$ is an affine transformation of $\Delta_{1,k}$ hence
    \[
    p'_{k-1,k}(t) = p'_{1,k}(t) = 
    \sum_{i=2}^k \binom{k}{i}\cdot t^{i-1} = \left((t+1)^k-1-k\cdot t\right)\cdot t^{-1}\enspace .
    \]
    Using the same formula for $p'_{1,n-k}(t)$ leads to the desired formula.
\end{proof}

As was mentioned earlier, there are many famous matroids that are sparse paving. We saw four sparse paving matroids in the running Example \ref{example:f-non-valuative}. In the next example we take a look at a family of sparse paving matroids with maximum number of circuit-hyperplanes. This example demonstrates that one can derive a bit more from our calculations.
\begin{example} 
    Let $m\geq 3$. Consider the rank $k=4$ matroid $\M$ whose bases are the quadruples of affinely independent binary vectors of length $m$. This is a sparse paving matroid on $n=2^m$ elements with the maximum number $\lambda = \frac{1}{4}\binom{n}{3}$ of circuit-hyperplanes, as every three points define a circuit. A simple double counting argument reveals that the number of pairs of these circuit-hyperplanes
    that share two elements is $\mu=\frac{3n-12}{8}\binom{n}{3}$.
    The polytope $\mathscr{P}(\M)$ has $\frac{6n^2-57n+132}{8}\binom{n}{3}$ many square faces all of which are induced by split hyperplanes, and hence can be read off from the quadratic term of the product $p'_{3,4}(t)\cdot p'_{1,n-4}(t)$, and 
    $
    \left(4\,\binom{n-3}{2}-\binom{n-4}{4}+\binom{n}{4}-3\,n^2+17\,n-20\right)\frac{1}{4}\,\binom{n}{3}
    $ many triangular faces that are also faces of the hypersimplex $\Delta_{4,n}$, e.g., for $m=3$ the matroid is the binary affine cube and its polytope has $420$ square and $448$ triangular faces.
\end{example}

\subsection{Face numbers of rank two matroids}

A loopless matroid of rank two is trivially paving, and hence a split matroid. This allows us to use the strength of Theorem~\ref{thm:main} to compute their $f$-vectors. 

The key is the following elementary observation. The hyperplanes, i.e., the flats of rank one, of a loopless matroid of rank two form a partition of the ground set, and conversely, any partition of the ground set defines precisely a single rank two matroid having each part as a flat. The bases of the matroid are obtained by taking two elements of the ground set, not belonging to the same part. 

Base polytopes of matroids of rank two have made prominent appearances throughout algebraic combinatorics, under various guises. Notably, as is pointed out in \cite[Section~6.1]{ferroni-higashitani}, they coincide with edge polytopes of complete multipartite graphs --- we refer to that paper for the precise definition of edge polytopes and a short overview of them. In this vein, the work of Ohsugi and Hibi \cite{ohsugi-hibi} addresses the edge polytopes of complete multipartite graphs, motivated both from toric geometry and graph theory. In particular, the content of \cite[Theorem~2.5]{ohsugi-hibi} provides a formula for the $f$-vector of the edge polytope of an arbitrary complete multipartite graph, and thus for general rank two matroid polytopes. Let us point out that there appears to be an error in the formula as they stated it --- in particular within the quantity they denote by $\alpha_i$.
As an application of Theorem~\ref{thm:main} we can give another formula for the $f$-vector of these polytopes. 

\begin{corollary} \label{cor:rank2}
    Let $\M$ be a loopless matroid of rank two having $s$ hyperplanes with cardinalities $h_1,\ldots, h_s$. Then, the number of $i$-dimensional faces of $\mathscr{P}(\M)$ or, equivalently, the edge polytope of a complete multipartite graph with parts of sizes $h_1,\ldots,h_s$ is given by:
    \[f_i(\mathscr{P}(\M)) = \binom{n+1}{i+2}+(s-1)\binom{n}{i+2} - \sum_{j<\ell} \binom{h_j+h_\ell+1}{i+2} + (s-2)\sum_{j=1}^s \binom{h_j+1}{i+2} - \sum_{j=1}^s \binom{n-h_j}{i+2}.\]
\end{corollary}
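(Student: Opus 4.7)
The strategy is to apply Theorem~\ref{thm:main} with $k=2$, since every loopless rank two matroid is paving and hence split. First, identify the matroid data. The proper non-empty cyclic flats of $\M$ are exactly the parallel classes of cardinality at least two, since singleton parallel classes are coloops and are not unions of circuits. Hence $\lambda_{1,h}$ counts the parts of size $h\ge 2$, and every other $\lambda_{r,h}$ vanishes. Any two distinct rank one cyclic flats $F_1,F_2$ are disjoint, so $\rk F_1+\rk F_2=2=\rk(F_1\cap F_2)+\rk(F_1\cup F_2)$, verifying \eqref{eq:star-property-pairs-of-flats} automatically. Thus the only non-vanishing $\mu_{\alpha,\beta,a,b}$ are $\mu_{1,1,a,b}$, counting unordered pairs of parts of sizes $a\le b$ with $a,b\ge 2$.

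A separate check handles the disconnected case. If $s=2$, then $\M=\U_{1,h_1}\oplus\U_{1,h_2}$ and $\mathscr{P}(\M)$ is a product of two simplices; from $f_{\Delta_{1,h}}(t)=((1+t)^h-1)/t$ one reads off $f_i=\binom{n}{i+2}-\binom{h_1}{i+2}-\binom{h_2}{i+2}$, which matches the corollary upon using $h_1+h_2=n$ and collapsing $\binom{h_1+h_2+1}{i+2}=\binom{n+1}{i+2}$. For $s\ge 3$ the matroid is connected and Theorem~\ref{thm:main} applies; substituting the $\lambda$ and $\mu$ values identified above, and plugging in the explicit formulas for $u_{1,2,h,n}(t)$ from Proposition~\ref{prop:nice-formula-u}, for $w_{1,1,a,b}(t)$ from Proposition~\ref{prop:nice-formula-w}, and for $f_{\Delta_{2,n}}(t)$ from Example~\ref{ex:hypersimplex}, produces an explicit polynomial expression for $f_{\mathscr{P}(\M)}(t)$.

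The main obstacle is the algebraic simplification of the resulting sums into the clean closed form stated in the corollary. A subtlety worth emphasizing is that the sums in the corollary formally range over \emph{all} parts of the partition (including singletons), whereas the application of Theorem~\ref{thm:main} only provides contributions from parts of size at least two; the agreement between the two formulations comes down to a binomial identity showing that the ``ghost'' contributions of singleton parts (the terms $\binom{h_j+h_\ell+1}{i+2}$ with $h_j=1$ or $h_\ell=1$) cancel against the diagonal terms $\binom{h_j+1}{i+2}$ and $\binom{n-h_j}{i+2}$ appearing in the last two sums. This cancellation, together with the bulk simplification, is carried out coefficient-wise by repeated applications of Vandermonde's identity $\sum_{k}\binom{h_j}{k}\binom{n-h_j}{i+2-k}=\binom{n}{i+2}$ and the hockey-stick identity, leaving the compact expression of the corollary.
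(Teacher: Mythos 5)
Your proposal follows essentially the same route as the paper's own proof: treat $s=2$ separately as a product of two simplices, and for $s\geq 3$ apply Theorem~\ref{thm:main} with $\lambda_{1,h}$ counting the parts of size $h\geq 2$ and $\mu_{1,1,a,b}$ counting the (automatically modular, pairwise disjoint) pairs of such parts, then substitute the explicit formulas of Propositions~\ref{prop:nice-formula-u} and~\ref{prop:nice-formula-w} together with Example~\ref{ex:hypersimplex} and simplify coefficient-wise via Vandermonde and Pascal-type identities, which is exactly how the paper proceeds (it, too, omits most of the ``long and tedious'' algebra). Two minor remarks: a singleton parallel class is \emph{not} a coloop of $\M$ when $s\geq 3$ (the correct reason it is not a cyclic flat is the one you also give, namely that it contains no circuit), and the reconciliation with the all-parts sums in the stated formula is seen most cleanly by observing that $u_{1,2,1,n}(t)$ and $w_{1,1,1,b}(t)$ vanish identically, so the sums coming from Theorem~\ref{thm:main} may be extended over all parts for free rather than through a separate ``ghost-term'' cancellation.
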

\begin{proof}
    It is straightforward to check the formula for the case $s=2$, that is $\mathscr{P}(\M) = \Delta_{1,h_1}\times \Delta_{1,h_2}$.
    If $s>2$ then $\M$ is connected and we may apply Theorem~\ref{thm:main}. 
    Observe that all pairs of flats are trivially modular, and they are pairwise disjoint. Thus, the non-vanishing coefficients are $\mu_{1,1,h_j,h_\ell}$.
    In the following we omit parts of the long and tedious calculations but indicate the main steps.    
    For $k=2$ and $r=1$ using some change of summation and the Vandermonde identity we obtain
    \begin{align*}
        \sum_{j<l} w_{1,1,h_j,h_\ell}(t) &= 
        \sum_{i=2}^{n-1} \left( \sum_{j<\ell} \binom{h_j+h_\ell}{i+2} -\sum_{j=1}^s\left((s-1)\binom{h_j}{i+2}+(n-h_j)\binom{h_j}{i+1}\right)\right)\cdot t^i\, (1+t) \enspace .
    \end{align*}
    Similarly, we get this formula for $p'_{1,h_j}(t)\cdot p'_{1,n-h_j}(t)\cdot (1+t)$
    \begin{align*}
    \sum_{i=2}^{n-1} \left(\binom{n}{i+2}-\binom{n-h_j}{i+2}-h\binom{n-h_j}{i+1}-(n-h_j)\binom{h_j}{i+1}-\binom{h_j}{i+2}\right)\cdot t^i\,(1+t)
\end{align*}
and for $p_{1,2,h_j,n}(t)$ the expression
\begin{align*}
     \sum_{i=1}^{n-1}\left(\binom{n}{i+2}-\binom{n-h_j}{i+2}-h\binom{n-h_j}{i+1}\right)t^{i+1}
    +\sum_{i=1}^{n-1}\left(h_j\binom{n-1}{i+1}-h_j\binom{n-h_j}{i+1}\right)\cdot t^i
\end{align*}
where the two sums correspond to the indices $i=0$ and $i=1$ in the definition of $p_{r,k,h,n}(t)$.
This gives us $u_{1,2,h_j,n}(t) = p_{1,2,h_j,n}(t)-p'_{1,h_j}(t)p'_{1,n-h_j}(t)(t+1)+\binom{h_j}{2}$ and hence $\sum_{j=1}^s u_{1,2,h_j,n}$. We also need 
    \begin{align*}
        f_{\Delta_{2,n}}(t) &= \binom{n}{2} + \sum_{i=1}^{n-1} \binom{n}{i+1}\sum_{j=1}^i \binom{n-i-1}{2-j}\cdot t^i
        = \binom{n}{2} + 3\binom{n}{3} \, t +\sum_{i=2}^{n-1}\binom{n}{i+1}(n-i) \cdot t^i \enspace . 
    \end{align*}
    Now putting the pieces together, applying Pascal's identity, and cancelling many terms, one may obtain the desired formula of the statement (where additional effort is required for the constant, linear and quadratic term).
\end{proof}

\begin{remark}
    Kim described in \cite{kim} how the cd-indices of polytopes change when performing hyperplane splits. From this, he derived a formula for the cd-index of rank two matroid base polytopes. This formula depends on the cd-index of polytopes of rank two matroids with $1$, $2$ and $3$ hyperplanes. While there are explicit expressions for the cd-index when the matroid has $1$ or $2$ hyperplanes, the case with $3$ hyperplanes remains unsolved. We point out that the cd-index contains more information than the $f$-vector --- however, recovering the $f$-vector from the cd-index is often a very laborious task.
\end{remark}

\section{Final remarks and open problems}
\noindent 
Steinitz characterized $f$-vectors of $3$-polytopes. Since then, the $f$-vectors of $4$-polytopes have been intensively studied. In spite of the great interest from mathematicians, only little is known about the face numbers of higher dimensional polytopes \cite{ziegler-2007} or $0/1$-polytopes \cite{ziegler01}. The contribution of this paper is an explicit formula for $f$-vectors of split matroids. However, many questions and problems regarding $f$-vectors of $0/1$-polytopes, or even the class of matroid polytopes itself, remain broadly open. This last section aims to propose a few problems and questions in this underexplored area of discrete geometry.

\subsection{Explicit formulas for other polytopes}

Besides the base polytope studied in this paper, another famous polytope that one may associate to a matroid is the so-called \emph{independence polytope}. This polytope is defined as the convex hull of the indicator vectors of all the independent sets, i.e., subsets of bases, of the matroid, and it contains the base polytope as a facet.

A natural challenge that we raise is to provide a good way of computing the $f$-vectors of these polytopes.

\begin{problem}
    Find a formula for the $f$-vector of the independent set polytope for (connected) split matroids.
\end{problem}

We speculate that one can approach this problem by using modifications of the ideas and the techniques that we presented in this article.

On the other hand, it is also natural to ask about the problem of finding a formula for $f$-vectors of matroid base polytopes or independence polytopes for \emph{arbitrary matroids}. These two problems appear to be considerably more difficult. Nonetheless, we pose the following broad question.

\begin{question}
    Can the approach of splitting polytopes, or treating the $f$-vector as a valuation with an error term, be used to obtain an explicit formula for the $f$-vector of an arbitrary matroid polytope? Independently of the details of the computation, what is the precise matroidal data that one needs in order to recover the $f$-vector?
\end{question}

Note that all matroid polytopes are cut out of the hypersimplex by intersecting it with split hyperplanes --- or, more precisely, with half-spaces whose boundary is a split hyperplane --- which are weakly compatible, i.e., that may intersect in the interior of the hypersimplex.

Also, every matroid base polytopes can be obtained by slicing pieces off of a unit cube. Unit cubes themselves possess many split hyperplanes; see \cite{JoswigHerrmann} for more details. Therefore, it seems reasonable to attempt to generalize the techniques and ideas of the present paper beyond the class of matroid polytopes, in order to include other classes of $0/1$-polytopes.

\begin{problem}
    Describe the $f$-vectors of $0/1$-polytopes in terms of their supporting split hyperplanes. 
\end{problem}

\subsection{The shape of \emph{f}-vectors of matroid polytopes}

A recent trend in matroid theory is that of proving unimodal and log-concave inequalities for various vectors of numbers associated to matroids. A finite sequence of numbers $(a_0,\ldots,a_n)$ is said to be \emph{unimodal} if there exists some index $0\leq j\leq n$ with the property that 
\[a_0\leq \cdots \leq a_{j-1}\leq a_j\geq a_{j+1}\geq \cdots \geq a_n.\] 
If all the $a_i$'s are positive, a stronger condition is that of \emph{log-concavity}, which asserts that for each index $1\leq j\leq n-1$ the inequalities $a_j^2\geq a_{j-1}a_{j+1}$ hold. 

There are two objects that, in other contexts, can be referred to as ``the $f$-vector of a matroid,'' though they hold no direct relation with the $f$-vector we studied in this paper. The first of them comes from considering a matroid as a pure simplicial complex with the property that each vertex-induced subcomplex is shellable \cite{bjorner}. In this case, one can define the $f$-vector of a matroid to be the $f$-vector of this simplicial complex. This has been object of intensive research and several open problems and conjectures exist in the literature regarding this $f$-vector, for instance Stanley's conjecture asserting that the corresponding $h$-vector is a pure $O$-sequence (see \cite{stanley-green}). In \cite{adiprasito-huh-katz} these $f$-vectors are proved to be log-concave.
The second object that sometimes is referred to as ``the $f$-vector of a matroid'' is the vector having as coordinates the number of flats of the matroid of each rank, i.e., the Whitney numbers of the second kind of $\M$. This object has also received some attention in the recent years; one of the main open problems regarding this $f$-vector was the so-called ``Top-Heavy Conjecture,'' which was settled in \cite{braden-huh-matherne-proudfoot-wang}. It remains as an open problem to prove or disprove the unimodality of the Whitney numbers of the second kind. 

In a similar vein, it is quite inviting to ask the following question.

\begin{question}
    Are the $f$-vectors of matroid base polytopes unimodal, or even log-concave?
\end{question}

It is known that there are simplicial polytopes having a non-unimodal $f$-vector; see \cite[Chapter~8.6]{ziegler}. However, let us point out that, within the existing literature, we were not able to find any examples of non-unimodal $f$-vectors for the general class of \emph{all} $0/1$-polytopes. We have been able to verify the log-concavity of the $f$-vectors of the following classes of matroids, in some cases relying critically on the results of this paper:
\begin{itemize}
    \item All matroids on a ground set of size at most $9$.
    \item Split matroids on a ground set of size at most $12$.
    \item Sparse paving matroids on a ground set of size at most $40$.
    \item Split matroids with four cyclic flats as in Lemma~\ref{lem:key} of size at most $50$.
    \item Schubert elementary split matroids on a ground set of size at most $100$.
    \item Lattice path matroids on a ground set of size at most $13$.
    \item Rank two matroids on a ground set of size at most $60$.
\end{itemize}

Matroid base polytopes have many remarkable properties. One that is particularly relevant is that their vertex-edge graph, i.e., the $1$-skeleton of the polytope, determines the entire polytope up to a rigid transformation; see \cite{holzmann-norton-tobey,SchroeterPineda} for more precise statements. Similarly,
the data of the $2$-skeleton of a matroidal subdivision describes the subdivision completely; this technique has been used in several articles, see for example \cite{HerrmannJoswigSpeyer}. It is tempting to ask whether the enumerative information encoded in the first few entries of the $f$-vector of a matroid base polytope is already sufficient to derive the remaining entries. More precisely, we ask for the following constant.

\begin{problem}
    Is it true that there exists a number $c$ such that the first $c$ entries of the $f$-vector of some matroid $\M$, i.e., $f_0, \ldots, f_{c-1}$, are enough to determine the complete $f$-vector of $\M$? If it is true, what is the smallest such $c$?
\end{problem}

One may consider various versions of the above problem, where the size or the rank of the matroid are part of the input or not. Further slight modifications consist of restricting to only connected matroids, or to matroids belonging to a specific class.
Our computer experiments indicate that $c=5$ suffices for all connected matroids on up to nine elements. This number cannot be smaller as we found two matroids of rank four on nine elements that agree on the sequence $f_0$, $f_1$, $f_2$, $f_3$ but not on the number of $4$-faces $f_4$. Their $f$-vectors are:
\[
(96, 753, 2110, 2934, 2262, 993, 240, 28, 1)
\text{ and } (96, 753, 2110, 2934, 2261, 992, 240, 28, 1).
\]

On the other hand, as a consequence of Corollary~\ref{coro:sparse-paving}, if one restricts to all sparse paving matroids of rank $k$ on $n$ elements, knowing the number of vertices $f_0$ and $2$-dimensional faces $f_2$ suffices to recover $\lambda$ and $\mu$, and hence the entire $f$-vector. Thus, in this version of the problem the answer is $c=3$.

\subsection{A digression on the extension complexity of split matroids} 

A motivation to study the face enumeration of matroid polytopes stems from the work of Rothvoss \cite{rothvoss},
Conforti, Kaibel, Walter and Weltge \cite{ConfortiKaibelWalterWeltge}, and Kaibel, Lee, Walter and Weltge \cite{KaibelLeeWalterWeltge}
on the extension complexity of matroid polytopes (see also the corrigendum \cite{KaibelLeeWalterWeltge_Correction}, and the article by Aprile and Fiorini \cite{aprile-fiorini}). The special case of hypersimplices is discussed in work by Grande, Padrol and Sanyal \cite{GrandePadrolSanyal}, and the case of $2$-level matroids is the main focus of \cite{aprile-cevallos-faenza,aprile-conforti-fiorini-faenza-huynh-macchia}.

Given a lattice polytope $\mathscr{P}\subseteq\mathbb{R}^n$, an \emph{extended formulation} of $\mathscr{P}$ is another lattice polytope $\mathscr{Q}\subseteq\mathbb{R}^m$ together with a projection map $\pi :\mathbb{R}^m \to \mathbb{R}^n$ which projects $\mathscr{Q}$ onto $\mathscr{P}$. The \emph{complexity} of an extended formulation is the number of facets of the polytope~$\mathscr{Q}$. The \emph{extension complexity} of $\mathscr{P}$, denoted $\xc(\mathscr{P})$, is the minimum complexity of an extended formulation of $\mathscr{P}$. 

One of the main tools for finding lower bounds on the extension complexity are the rectangular covering numbers, but these numbers grow at most quadratically in the size of the ground set \cite[Proposition~2]{KaibelLeeWalterWeltge}.
Furthermore, the extension complexity of regular matroids is polynomial \cite{aprile-fiorini}. The extension complexity of matroid polytopes is also related to the ``hitting number'' of the base polytope, see \cite{aprile}.

Rothvoss \cite[Corollary~6]{rothvoss} proved\footnote{To be precise, Rothvoss proved that the extension complexity of the \emph{independence polytope} of some matroid is exponential, but an elementary reasoning shows that this is equivalent to an analogous statement for the base polytope. See for example the short explanation in \cite[p.\ 1]{aprile-fiorini}.} that for all $n$ there exists a matroid $\M$ on $n$ elements whose base polytope has extension complexity $\xc(\mathscr{P}(\M)) \in \Omega\left(\frac{2^{n/2}}{n^{5/4} \sqrt{\log(2n)}}\right)$. Moreover, Rothvoss' proof is non-constructive and relies only on an enumerative result of matroids, that therefore guarantees that whatever these examples are, they must belong to the class of sparse paving matroids, and are therefore split matroids. Thus it remains a notorious open problem to find an explicit family of matroids having exponential extension complexity. In fact, having one would yield an explicit infinite family of Boolean functions requiring superlogarithmic depth
circuits, according to an observation attributed to G\"o\"os in \cite[Section~8]{aprile-fiorini} and \cite{Matroid-union-post}; see also the relevant \cite{goos-jain-watson}.

Although we cannot compute explicitly the extension complexity of split matroid polytopes, we conjecture that the following natural family of sparse paving matroids might have the desired exponential extension complexity.

\begin{conjecture}\label{conj:extension-complexity}
    For each positive integer $n$, let us denote by $\mathsf{S}_n$ the sparse paving matroid on $[n]$ of rank $\lfloor\frac{n}{2}\rfloor$, having the maximal possible number of circuit-hyperplanes, and whose set of bases is lexicographically minimal\footnote{In the sense that if we write each individual basis $B\in\mathscr{B}$ with their elements in increasing order, and sort the set $\mathscr{B}$ lexicographically, then $\mathscr{B}$ is minimal.}. Then 
    \[\xc(\mathscr{P}(\mathsf{S}_n)) \in \Omega\left(\frac{2^{n/2}}{n^{5/4} \sqrt{\log(2n)}}\right).\]
\end{conjecture}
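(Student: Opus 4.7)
\textbf{Proof proposal for Conjecture~\ref{conj:extension-complexity}.} The starting point is Yannakakis's theorem, which identifies $\xc(\mathscr{P}(\mathsf{S}_n))$ with the non-negative rank of the slack matrix $S$ of $\mathscr{P}(\mathsf{S}_n)$. The rows of $S$ are indexed by the bases of $\mathsf{S}_n$ and the columns by its facets; from the description of sparse paving matroid polytopes, these facets consist of the $2n$ hypersimplex facets $\{x_i=0\}$ and $\{x_i=1\}$, the two rank inequalities, and one split facet $\sum_{i\in H}x_i \leq k-1$ for each circuit-hyperplane $H$. The combinatorially meaningful block of $S$ is the basis-by-circuit-hyperplane submatrix whose entry at $(B,H)$ equals $k-1-|B\cap H|\in\{0,1\}$. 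I would begin by analyzing this submatrix in detail, using the lex-minimality of $\mathsf{S}_n$ to describe its circuit-hyperplanes explicitly; the expected picture is that they form a dense, highly structured family, essentially an optimal partial Steiner system, or equivalently a constant-weight binary code of length $n$, weight $k=\lfloor n/2\rfloor$, and minimum Hamming distance $4$.

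With this description in hand, the main strategy is to embed a matrix of known exponential non-negative rank into the support pattern of $S$. The canonical target is the unique-disjointness matrix $\mathrm{UDISJ}_m$, whose non-negative rank is $2^{\Omega(m)}$. Concretely, the plan is to select sub-families $\{B_1,\ldots,B_N\}$ of bases and $\{H_1,\ldots,H_N\}$ of circuit-hyperplanes with $N=2^{\Theta(n/2)}$ so that the pattern $|B_i\cap H_j|=k-1$ (\emph{i.e.}, zero slack) occurs precisely when indices $i,j$ correspond to a ``uniquely intersecting'' pair under some encoding of pairs of subsets of $[m]$ with $m\approx n/2$. The lexicographically minimal construction of $\mathsf{S}_n$ should be amenable to such an analysis: greedy lex-min choices tend to produce block-uniform structures on the level of shadows, and one may hope to read off the desired disjointness-like rectangle directly from the recursion that builds $\mathsf{S}_n$ level by level. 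Together with Rothvoss's hyperplane separation bound, an embedding of this form would deliver the $2^{n/2}/\mathrm{poly}(n)$ lower bound required by the conjecture.

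The hard step, and the true obstacle, is the last one. As indicated in the excerpt, rectangle-covering numbers of matroid polytopes grow at most polynomially in $n$, so any purely combinatorial fooling-set argument is doomed; one has no choice but to go through a genuinely analytic non-negative rank lower bound such as hyperplane separation. For a \emph{specific, explicitly defined} family of matrices this is essentially the same barrier one meets when trying to prove explicit circuit depth lower bounds, as recalled in the introduction to this section. A realistic intermediate milestone is therefore to first establish a weaker sub-exponential bound of the form $\xc(\mathscr{P}(\mathsf{S}_n))\geq 2^{n^{\varepsilon}}$ for some $\varepsilon>0$, perhaps via a semi-constructive hybrid of Rothvoss's counting argument with the structural features of the lex-minimal construction (for instance, by averaging the hyperplane separation bound over a small pseudo-random family of restrictions of $\mathsf{S}_n$), and only then attempting to close the gap to the full $2^{n/2}$ bound. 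An alternative route worth pursuing is to bypass the slack matrix of $\mathsf{S}_n$ altogether by constructing an explicit affine projection from the correlation polytope, or from the perfect matching polytope of a suitable graph, to a face of $\mathscr{P}(\mathsf{S}_n)$; any such reduction would then import the known exponential lower bounds for those polytopes.
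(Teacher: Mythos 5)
The statement you are addressing is Conjecture~\ref{conj:extension-complexity}, and the paper itself offers no proof of it --- it explicitly says ``Although we cannot prove this conjecture, at least we can prove that the number of facets of $\mathscr{P}(\mathsf{S}_n)$ is indeed exponential,'' which is a far weaker statement (an exponential facet count does not bound the extension complexity from below). Your text is likewise not a proof: it is a research plan whose decisive step --- actually establishing an exponential non-negative rank lower bound for the slack matrix of this explicit family --- is openly deferred. Identifying $\xc$ with the non-negative rank via Yannakakis, isolating the basis-by-circuit-hyperplane block, and hoping to embed $\mathrm{UDISJ}$ into its support pattern is a sensible outline, but no embedding is constructed, and there is no argument that lex-minimality yields the required structure. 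Indeed the paper points out that even describing $\mathsf{S}_n$ explicitly is open (the number of circuit-hyperplanes of $\mathsf{S}_{20}$ is unknown), so the premise ``using the lex-minimality of $\mathsf{S}_n$ to describe its circuit-hyperplanes explicitly'' is itself an unsolved problem, not an available tool. Moreover, as the paper recalls via the observation attributed to G\"o\"os, proving exponential extension complexity for any explicit matroid family would yield explicit Boolean functions requiring superlogarithmic depth circuits, so the missing step is not a routine verification but a known barrier; the same applies to your alternative route of projecting the correlation or perfect matching polytope onto a face of $\mathscr{P}(\mathsf{S}_n)$, for which no projection is exhibited.

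Two smaller inaccuracies are worth flagging. First, the slack of the facet $\sum_{i\in H}x_i\leq k-1$ at a basis $B$ is $k-1-|B\cap H|$, which ranges over $\{0,1,\ldots,k-1\}$, not $\{0,1\}$: a basis of $\mathsf{S}_n$ can meet a circuit-hyperplane in few or no elements. Second, for the base polytope there are no ``two rank inequalities'' among the facets, since $\sum_{i}x_i=k$ is an equation of the affine hull; by Corollary~\ref{coro:sparse-paving} the facets are exactly the $2n$ coordinate facets together with the $\lambda$ split facets (for $n>4$). Neither slip affects the main verdict, which is that the central lower-bound argument is absent, so the conjecture remains unproven by your proposal, exactly as it remains unproven in the paper.
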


Observe that the explicit determination of the matroid $\mathsf{S}_n$ is yet another open problem, related to the construction of binary codes with constant weight and Hamming distance $4$, as well as stable subsets of Johnson graphs. In particular, we point out that it remains an open problem to determine the matroid $\mathsf{S}_{20}$ --- in fact, its number of circuit-hyperplanes seems to be unknown, though it is between $13452$ and $16652$ (see~\cite[Table~I]{agrell}). 

Although we cannot prove this conjecture, at least we can prove that the number of facets of $\mathscr{P}(\mathsf{S}_n)$ is indeed exponential.

\begin{proposition}
    Let $\mathsf{S}_n$ be the matroid described in the prior statement. The number of facets of $\mathscr{P}(\mathsf{S}_n)$ satisfies:
        \[ f_{n-2}(\mathsf{S}_{n}) \geq 2n + \frac{1}{n}\binom{n}{\lfloor\frac{n}{2}\rfloor} \sim c \,\frac{2^n}{n^{3/2}}. \]
\end{proposition}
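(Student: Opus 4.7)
The plan is to show that $f_{n-2}(\mathscr{P}(\mathsf{S}_n)) = 2n + \lambda$, where $\lambda$ denotes the number of circuit-hyperplanes of $\mathsf{S}_n$, and then to lower-bound $\lambda$ by a pigeonhole argument on $k$-subset sums modulo $n$ (with $k = \lfloor n/2 \rfloor$).

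For the first step, I would extract the coefficient of $t^{n-2}$ from the expression in Corollary~\ref{coro:sparse-paving}. Since $\dim \mathscr{P}(\mathsf{S}_n) = n-1$, this coefficient counts the facets. The coefficient of $t^{n-2}$ in $f_{\Delta_{k,n}}(t)$ equals $2n$, namely the number of facets of the hypersimplex. The polytope $\mathscr{P}(\LL_{1,k,n-k,n})$ underlying the definition of $u(t)$ is $(n-1)$-dimensional with exactly $2n+1$ facets, the $2n$ hypersimplex facets plus the single splitting hyperplane $\sum_{i\in F} x_i = k-1$; hence the coefficient of $t^{n-2}$ in $u(t)$ is $-1$. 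For $n\ge 5$ the term $t^2+t^3$ does not contribute. Putting this together, the coefficient of $t^{n-2}$ in the formula of Corollary~\ref{coro:sparse-paving} equals $2n - \lambda\cdot(-1) = 2n+\lambda$. A small preliminary check ensures that $\mathsf{S}_n$ is connected and has no loops or coloops: the maximum number of circuit-hyperplanes of a rank-$k$ sparse paving matroid on $[n]$ is much smaller than $\binom{n-1}{k-1}$, ruling out any element that lies in every or no circuit-hyperplane.

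For the second step, I would partition $\binom{[n]}{k}$ into the $n$ residue classes
\[
C_s := \left\{ B \in \binom{[n]}{k} \,:\, \textstyle\sum_{i\in B} i \equiv s \pmod{n} \right\}, \qquad s \in \{0,1,\dots,n-1\}.
\]
If two distinct sets $B, B' \in C_s$ differed in exactly one element, say $B' = (B \smallsetminus \{a\}) \cup \{b\}$, then $b-a\equiv 0\pmod n$ with $a\neq b$ in $[n]$, which is impossible. Hence any two distinct elements of a single class have symmetric difference at least $4$, so $C_s$ is a valid family of circuit-hyperplanes of a sparse paving matroid of rank $k$ on $[n]$. Pigeonhole forces the largest class to have size at least $\tfrac{1}{n}\binom{n}{k}$, and by the maximality in the definition of $\mathsf{S}_n$ we conclude $\lambda \geq \tfrac{1}{n}\binom{n}{\lfloor n/2 \rfloor}$.

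Combining the two steps yields the inequality in the statement, and the asymptotic $\sim c\cdot 2^n/n^{3/2}$ is Stirling's estimate $\binom{n}{\lfloor n/2\rfloor}\sim\sqrt{2/\pi}\cdot 2^n/\sqrt{n}$. The main obstacle is the first step, namely the verification of the identity $f_{n-2}=2n+\lambda$; once that is in place, the lower bound on $\lambda$ via the orbit construction and the asymptotic estimate are both routine.
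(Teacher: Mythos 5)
Your proposal follows essentially the same route as the paper: the facet identity $f_{n-2}=2n+\lambda$ is extracted from Corollary~\ref{coro:sparse-paving}, and the bound $\lambda\ge\frac{1}{n}\binom{n}{\lfloor n/2\rfloor}$ is exactly the Graham--Sloane bound that the paper cites, which you reprove via the standard subset-sum-mod-$n$ pigeonhole construction. The only quibble is the small case $n=5$ (rank $2$), where the Schubert split polytope does \emph{not} have $2n+1$ facets (the facets $x_j\le 1$ for $j$ in the circuit-hyperplane are cut away) and the $t^2+t^3$ term does reach degree $n-2$, so your step-one claims are valid for $n\ge 6$ rather than $n\ge 5$ --- a caveat the paper's own parenthetical ``$n>4$'' shares.
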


\begin{proof}
    By Corollary~\ref{coro:sparse-paving} the number of facets of $\mathsf{S}_n$ is $2n+\lambda$ (whenever $n>4$) where $\lambda$ is the number of circuit-hyperplanes of $\mathsf{S}_n$. As follows from a result of Graham and Sloane in \cite{graham-sloane}, the maximal possible value of $\lambda$ is at least $\frac{1}{n}\binom{2n}{n}$; see for example \cite[Lemma~4.14]{ferroni-schroter}.
\end{proof}

\begin{remark}
    Even though the number of facets of a matroid polytope can be exponential, for arbitrary $0/1$-polytopes in $\mathbb{R}^n$ it is known that the number of facets can be larger than $\left(\frac{cn}{\log n}\right)^{n/4}$, via a random construction \cite{barany-por}.
\end{remark}

\bibliographystyle{amsalpha0}
\bibliography{bibliography}

\end{document}